\documentclass[12pt,amsfonts]{article}
\usepackage[margin=1in]{geometry}  % set the margins to 1in on all sides
\usepackage{graphicx}              % to include figures
\usepackage[latin1]{inputenc}
\usepackage{amsmath, amssymb, amsthm, epsfig}               % great math stuff
\usepackage{enumerate}
\usepackage{amsfonts}              % for blackboard bold, etc
\usepackage{amsthm}                % better theorem environments
\usepackage{amsthm}
\usepackage{enumerate}
\theoremstyle{plain}
\usepackage{color}
\pretolerance=10000

\def\dis{\displaystyle}
\def\nd{\noindent}
\def\thend{\rule{3mm}{3mm}}

\newtheorem{claim}{Claim}[section]
\newtheorem{theorem}{Theorem}[section]

 % "letter-numbered" theorems
\newtheorem{proposition}{Proposition}[section]
\newtheorem{lemma}{Lemma}[section]

\newtheorem{definition}{Definition}[section]
\newtheorem{corollary}{Corollary}[section]
\newtheorem*{theorem*}{Theorem}

\numberwithin{equation}{section}

%%%--------------------------------------------------------------------------------------------------

\begin{document}
\title{Multiple positive solutions for a Schr\"{o}dinger logarithmic equation}
\author{ Claudianor O. Alves\footnote{C.O. Alves was partially supported by CNPq/Brazil  304804/2017-7.} \,\, and \,\, Chao Ji \footnote{C. Ji was partially supported by Shanghai Natural Science Foundation(18ZR1409100).}}

\maketitle

\begin{abstract}
This article concerns  with the existence of multiple positive solutions for the following logarithmic Schr\"{o}dinger equation
$$
\left\{
\begin{array}{lc}
-{\epsilon}^2\Delta u+ V(x)u=u \log u^2, & \mbox{in} \quad \mathbb{R}^{N}, \\
%u(x)>0, & \mbox{in} \quad \mathbb{R}^{N} \\
u \in H^1(\mathbb{R}^{N}), & \;  \\
\end{array}
\right.
$$
where  $\epsilon >0$,  $N \geq 1$  and $V$ is a continuous function with a global minimum. Using variational method, we prove that for small enough $\epsilon>0$,  the "shape" of the graph of the function $V$ affects  the number of nontrivial solutions.
\end{abstract}

{\small \textbf{2010 Mathematics Subject Classification:} 35A15, 35J10; 35B09}

{\small \textbf{Keywords:} Variational method, Logarithmic Schr\"odinger equation, Positive solutions, Multiple solutions.}

\section{Introduction}

Recently, the logarithmic Schr\"odinger equation given by
$$
i \epsilon \partial_t \Psi = - \epsilon^{2}\Delta \Psi + (W(x	)+w)\Psi- \Psi \log|\Psi|^2, \ \Psi:[0, \infty) \times \mathbb{R}^{N} \rightarrow \mathbb{C}, \ N \geq 1,
$$
 has  received a special attention because it appears in a lot of physical applications, such as quantum mechanics, quantum optics, nuclear physics, transport and diffusion phenomena, open quantum systems, effective quantum gravity, theory of superfluidity and Bose-Einstein condensation (see \cite{z} and the references therein). In its turn, standing waves solution, $\Psi$, for this logarithmic Schr\"odinger equation  is related to solutions of the equation
$$
-\epsilon^{2}\Delta u+ V(x)u=u \log u^2,  \quad \mbox{in} \quad \mathbb{R}^{N}.
$$

Besides the importance in applications, this last  equation is very interesting in the mathematical point of view, because it arises a lot of difficulties to apply variational methods in order to find a solution for it. The natural candidate for the associated energy functional would formally be the functional
$$
\widehat{I}_\epsilon(u)=\frac{1}{2}\displaystyle \int_{\mathbb{R}^N}(\epsilon^{2}|\nabla u|^2+V(x)|u|^2)dx-\displaystyle \int_{\mathbb{R}^N} H(u)dx,
$$
\noindent where
\begin{equation}\label{lula}
H(t)=\displaystyle\int^t_0s\log s^2ds=\displaystyle \frac{-t^2}{2}+\displaystyle \frac{t^2\log t^2}{2}, \quad \forall t \in \mathbb{R}
\end{equation}
that is,
$$
\widehat{I}_\epsilon(u)=\frac{1}{2}\displaystyle \int_{\mathbb{R}^N}(\epsilon^{2}|\nabla u|^2+(V(x)+1)|u|^2)dx-\displaystyle \frac{1}{2}\int_{\mathbb{R}^N} u^2 \log u^2\,dx.
$$
However, this functional is not well defined in $H^{1}(\mathbb{R}^N)$ because there is $u \in H^{1}(\mathbb{R}^N)$ such that $\int_{\mathbb{R}^N}u^{2}\log u^2 \, dx=-\infty$. In order to overcome this technical difficulty some authors have used different techniques.

In \cite{squassina},  d'Avenia, Montefusco and Squassina have studied the existence of multiple solutions for a logarithmic elliptic equation of the type
$$
\left\{
\begin{array}{lc}
-\Delta u+ u=u \log u^2, & \mbox{in} \quad \mathbb{R}^{N}, \\
u \in H^1(\mathbb{R}^{N}). & \;  \\
\end{array}
\right.
\eqno{(P_1)}
$$
The authors obtained solutions for this equation by applying the non-smooth critical point theory, found in Degiovanni and Zani \cite{DZ}, to the energy functional defined on the space of radial functions $H_{rad}^{1}(\mathbb{R}^N)$. In \cite{ASZ}, d'Avenia, Squassina and Zenari have used the same approach to show the existence of solution for a  fractional logarithmic Schr\"odinger equation of the type
$$
\left\{
\begin{array}{lc}
(-\Delta)^{s} u+ u=u \log u^2, & \mbox{in} \quad \mathbb{R}^{N}, \\
u \in H^s(\mathbb{R}^{N}), & \;  \\
\end{array}
\right.
\eqno{(P_2)}
$$
for $s \in (0,1)$ and $N>2s$.

In \cite{sz}, Squassina and Szulkin have showed the existence of multiple solutions for the following class of problem
$$
\left\{
\begin{array}{lc}
-\Delta u+ V(x)u=Q(x)u \log u^2, & \mbox{in} \quad \mathbb{R}^{N}, \\
u \in H^1(\mathbb{R}^{N}), & \;  \\
\end{array}
\right.
\eqno{(P_3)}
$$
where $V,Q:\mathbb{R}^N \to \mathbb{R}$ are 1-periodic continuous functions verifying
$$
\min_{x \in \mathbb{R}^N}Q(x)>0 \quad \mbox{and} \quad \min_{x \in \mathbb{R}^N}(V+Q)(x)>0.
$$
In that paper, the authors have used the minimax principles for lower semicontinuous functionals, developed by Szulkin \cite{szulkin}, to prove the existence of geometrically distinct multiple solutions and the existence of a ground state solution. The multiple solutions follow by genus theory found in \cite{szulkin}, while the existence of ground state follows by a specific deformation lemma, see \cite[Lemma 2.14]{sz}.

Later, Ji and Szulkin in \cite{cs} have established the existence of multiple solutions for a problem of the type
$$\label{encontrar}
\left\{
\begin{array}{lc}
-\Delta u+ V(x)u=u \log u^2, & \mbox{in} \quad \mathbb{R}^{N}, \\
u \in H^1(\mathbb{R}^{N}), & \;  \\
\end{array}
\right.
\eqno{(P_4)}
$$
where $V:\mathbb{R}^N \to \mathbb{R}$ is a continuous function that satisfies
$$
\lim_{|x|\to +\infty}V(x)=V_\infty \quad \mbox{where} \quad V_\infty +1 \in (0,+\infty].
$$

With the same approach explored in \cite{sz}, the above authors showed that if $V_\infty=+\infty$, then $(P_4)$ has infinitely many solutions. If $V_\infty \in (-1,+\infty)$ and the spectrum $\sigma(-\Delta+V+1) \subset (0,+\infty)$, then problem $(P_4)$ has a ground state solution. In \cite{TZ}, Tanaka and Zhang have studied the existence of solution for $(P_3)$. In that very nice paper the authors have observed that the positivity of $V$ is not essential.

Finally, in a recent work,  Alves and de Morais Filho in \cite{AlvesdeMorais} have used the minimax method found in \cite{szulkin} to show the existence and concentration of positive solution for the problem
\begin{equation}\label{1}
\left\{
\begin{array}{lc}
-{\epsilon}^2\Delta u+ V(x)u=u \log u^2, & \mbox{in} \quad \mathbb{R}^{N}, \\
%u(x)>0, & \mbox{in} \quad \mathbb{R}^{N} \\
u \in H^1(\mathbb{R}^{N}), & \;  \\
\end{array}
\right.
\end{equation}
where  $\epsilon >0, N \geq 1$  and $V$ is a continuous function with a global minimum.

Motivated by results found  in the above mentioned papers, in the present paper we intend to study the existence of multiple solutions for problem (\ref{1})
by supposing the following conditions on potential $V$:
\begin{itemize}
	\item[\rm ($V1$)] $V:\mathbb{R}^N\rightarrow\mathbb{R}$ is a continuous function such that
	$$
	\lim_{|x|\rightarrow\infty}V(x)=V_\infty,
	$$
	with $0<V(x)<V_\infty$ for any $x\in \mathbb{R}^N$.

	\item[\rm ($V2$)]There exist $l$ points $z_1,z_2,...,z_l$ in $\mathbb{R}^N$ with $z_1=0$ such that
	$$
	1=V(z_i)=\min_{x\in\mathbb{R}^N}V(x),~~~{\rm for}~~1\leq i\leq l.$$
	
\end{itemize}

By a change of variable, we know that problem (\ref{1}) is equivalent to the problem
\begin{equation}\label{1'}
\left\{
\begin{array}{lc}
-\Delta u+ V(\epsilon x)u=u \log u^2, & \mbox{in} \quad \mathbb{R}^{N}, \\
%u(x)>0, & \mbox{in} \quad \mathbb{R}^{N} \\
u \in H^1(\mathbb{R}^{N}). & \;  \\
\end{array}
\right.
\end{equation}

\begin{definition}
For us, a positive solution of (\ref{1'}) means a positive function $u \in H^1(\mathbb{R}^{N}) \setminus \{0\}$ such that $ u^2\log u^2 \in L^1(\mathbb{R}^{N})$ and
\end{definition}

\begin{equation}
\displaystyle \int_{\mathbb{R}^N} (\nabla u \cdot \nabla v +V(\epsilon x)u \cdot v)dx=\displaystyle \int_{\mathbb{R}^N} uv \log u^2dx,\,\, \mbox{for all } v \in C^\infty_0(\mathbb{R}^{N}).
\end{equation}

The main result to be proved is the theorem below.

\begin{theorem}\label{teorema}
Suppose that $V$ satisfies $(V1)$ and ($V2$). Then there is $\epsilon_*>0$ such that problem (\ref{1}) has at least $\l$ positive solutions in $H^{1}(\mathbb{R}^N)$ for all $\epsilon \in (0, \epsilon_*)$.
\end{theorem}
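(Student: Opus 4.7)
The plan is to follow the Benci--Cerami--Cingolani--Lazzo multiplicity scheme based on Lusternik--Schnirelmann category, adapted to the non-smooth logarithmic setting via Szulkin's critical point theory. One works with the rescaled problem (\ref{1'}), whose formal energy is
\[
I_\epsilon(u)=\tfrac{1}{2}\int_{\mathbb{R}^N}\!\bigl(|\nabla u|^2+(V(\epsilon x)+1)|u|^2\bigr)dx-\tfrac{1}{2}\int_{\mathbb{R}^N}\!u^2\log u^2\,dx,
\]
and decomposes it as $I_\epsilon=\mathcal{A}_\epsilon-\mathcal{B}_\epsilon$ with $\mathcal{A}_\epsilon\in C^1$ and $\mathcal{B}_\epsilon$ convex and lower semicontinuous, following Squassina--Szulkin \cite{sz} and Ji--Szulkin \cite{cs}. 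One analyzes $I_\epsilon$ on the Nehari-type manifold $\mathcal{N}_\epsilon$, whose ground-state level $c_\epsilon=\inf_{\mathcal{N}_\epsilon}I_\epsilon$ is finite, positive and attained by the results in \cite{AlvesdeMorais}.

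Two autonomous problems are central. At each minimum $z_i$ of $V$ one has $-\Delta u+u=u\log u^2$, with positive radial ground state $\omega$ and level $c_0$; at infinity, $-\Delta u+V_\infty u=u\log u^2$ gives level $c_\infty$. Since $V(x)<V_\infty$, a strict comparison yields $c_\infty>c_0$, while a direct test-function estimate using a translate of $\omega$ placed near $z_i/\epsilon$ gives $c_\epsilon\to c_0$ as $\epsilon\to 0^+$. Hence, for small $\epsilon$, the relevant energy levels of (\ref{1'}) lie strictly below the compactness threshold $c_\infty$.

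The category argument uses two maps. For $r>0$ small enough that the balls $B_r(z_i)$ are pairwise disjoint, define
\[
\Phi_\epsilon(z)(x)=t_\epsilon(z)\,\eta\!\bigl(|\epsilon x-z|\bigr)\,\omega\!\bigl(x-z/\epsilon\bigr),\qquad z\in\bigcup_{i=1}^{l}\overline{B_r(z_i)},
\]
with $\eta$ a smooth cutoff and $t_\epsilon(z)>0$ the unique Nehari projection, and verify $I_\epsilon(\Phi_\epsilon(z))=c_0+o(1)$ uniformly in $z$. Dually, introduce a barycenter $\beta_\epsilon(u)=\bigl(\int u^2\bigr)^{-1}\int\Upsilon(\epsilon x)u^2\,dx$, with $\Upsilon$ equal to the identity on a large ball containing all $z_i$ and bounded outside. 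The key concentration lemma to establish is: if $\epsilon_n\to 0$ and $u_n\in\mathcal{N}_{\epsilon_n}$ with $I_{\epsilon_n}(u_n)\to c_0$, then $\beta_{\epsilon_n}(u_n)\to z_i$ for some $i$ along a subsequence. Granted this, $\beta_\epsilon\circ\Phi_\epsilon$ is homotopic to the inclusion $\{z_1,\dots,z_l\}\hookrightarrow\mathbb{R}^N$, so the sublevel $\mathcal{N}_\epsilon^{c_0+h(\epsilon)}$ has Lusternik--Schnirelmann category at least $l$ for a suitable $h(\epsilon)\to 0^+$; Szulkin's multiplicity theorem for non-smooth functionals then yields at least $l$ critical points of $I_\epsilon$ on $\mathcal{N}_\epsilon$. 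Positivity follows from the even symmetry of $I_\epsilon$ (replace $u$ by $|u|$) together with a maximum-principle argument as in \cite{AlvesdeMorais}.

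The main obstacle will be proving the concentration lemma, which amounts to a local Palais--Smale condition below $c_\infty$: a non-convergent sequence must lose mass in translation, and the lost profile has to be identified as a solution of the limit problem at infinity, thereby carrying energy at least $c_\infty$ and contradicting the energy bound. Because of the logarithm, the Brezis--Lieb--type splittings must be performed separately on the $C^1$ and convex pieces of $I_\epsilon$; the lower semicontinuity and Fatou-type super-additivity of the convex part, together with the uniform subdifferential estimates available in Szulkin's framework, are what make the concentration--compactness step compatible with the singular nonlinearity.
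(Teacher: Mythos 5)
Your plan follows the Benci--Cerami/Cingolani--Lazzo category scheme carried out directly on $\mathbb{R}^N$, whereas the paper follows Cao--Noussair: it localizes the problem to balls $B_R(0)$, where the truncated functional $J_{\epsilon,R}$ \emph{is} $C^1$ and satisfies $(PS)$, produces $l$ critical points there by minimizing over the sets $\Omega^i_{\epsilon,R}=\{u\in\mathcal{N}_{\epsilon,R}: |Q_\epsilon(u)-z_i|<\rho_0\}$ via Ekeland's principle (using the strict inequality $\alpha^i_{\epsilon,R}<\tilde\alpha^i_{\epsilon,R}$, so no deformation lemma is needed), and only then lets $R\to+\infty$, using concentration--compactness to show the weak limits are nonzero solutions still separated by the barycenter map. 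You have correctly identified the essential ingredients that both routes share: the comparison $c_0<c_\infty$, the limit $c_\epsilon\to c_0$, the barycenter map (the paper's $Q_\epsilon$), and the concentration lemma (the paper's Lemma \ref{Lem:3.2a}) as the technical heart.

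The genuine gap is in the step where you invoke ``Szulkin's multiplicity theorem for non-smooth functionals'' to convert $\mathrm{cat}\,(\mathcal{N}_\epsilon^{c_0+h(\epsilon)})\geq l$ into $l$ critical points. Szulkin's theory in \cite{szulkin} provides minimax and genus results for functionals of the form $\Phi+\Psi$ on the whole space, but it does not supply, off the shelf, a deformation lemma on a Nehari-type constraint for this functional, which is not even continuous on $H^1(\mathbb{R}^N)$ (there are $u\in H^1$ with $\int u^2\log u^2=-\infty$). This is precisely the obstruction the paper's architecture is built to avoid: even for the \emph{single} ground state, \cite{sz} had to prove a bespoke deformation lemma, and extending such a tool to an LS-category argument on $\mathcal{N}_\epsilon$ is a substantial piece of work you cannot simply cite. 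A second, smaller imprecision: the homotopy $\beta_\epsilon\circ\Phi_\epsilon\simeq \mathrm{id}$ must be realized inside a disjoint union of small balls around the $z_i$ (where the category of $\{z_1,\dots,z_l\}$ is $l$), not inside $\mathbb{R}^N$, where every map is null-homotopic and the category estimate collapses. If you want to keep your scheme, the honest fix is either to develop the missing non-smooth deformation machinery on the constraint, or to do what the paper does and run the multiplicity argument for the $C^1$ truncations $J_{\epsilon,R}$ first, passing to the limit in $R$ afterwards.
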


In the proof of Theorem \ref{teorema}, we adapt for our problem some ideas explored in Cao and Noussair \cite{CN}, where the existence and multiplicity of solutions have considered for the following class of problem
\begin{equation}\label{(1.2)}
\left\{
\begin{aligned}
&-\Delta u+u=A(\epsilon x)|u|^{p-2}u~{\rm ~in}
~\mathbb R^N,\\
&u\in H^1(\mathbb R^N).
\end{aligned}
\right.
\end{equation}
 Using Ekeland's variational principle and concentration compactness principle of Lions \cite{lions}, Cao and Noussair proved that if $A$ has $l$ equal maximum points,  then problem  (\ref{(1.2)}) has at least $l$ positive solutions and $l$ nodal solutions if $\epsilon>0$ is small enough.  We would like to point out that different of \cite{CN}, where the energy functional is $C^{1}$, we cannot work directly with the energy functional associated with (\ref{1'}) because it is not continuous, and so, it is not $C^{1}$. Have this in mind, for each $R>0$, we first find a solution $u_{\epsilon,R} \in H_0^{1}(B_R(0))$, and after, taking the limit of $R \to +\infty$ we get a solution for the original problem.

The plan of the paper is as follows. In Section 2 we show some preliminary results which will be used later on. In Section 3 we prove the existence
of multiple solutions for an auxiliary problem, while in Section 4 we prove Theorem 1.1.

\vspace{0.5 cm}

\noindent \textbf{Notation:} From now on in this paper, otherwise mentioned, we use the following notations:
\begin{itemize}
	\item $B_r(u)$ is an open ball centered at $u$ with radius $r>0$, $B_r=B_r(0)$.
	
	\item If $g$ is a mensurable function, the integral $\dis\int_{\mathbb{R}^N}g(x)\,dx$ will be denoted by $\dis\int g(x)\,dx$. Moreover, we denote by $g^{+}$ and $g^{-}$ the positive and negative part of $g$ given by
	$$
	g^{+}(x)=\max\{g(x),0\} \quad \mbox{and} \quad	g^{-}(x)=\max\{-g(x),0\}.
	$$
	
	\item   $C$ denotes any positive constant, whose value is not relevant.
	
	\item  $|\,\,\,|_p$ denotes the usual norm of the Lebesgue space $L^{p}(\mathbb{R}^N)$, for $p \in [1,+\infty]$.
	
	\item  $H_c^{1}(\mathbb{R}^N)=\{u \in H^{1}(\mathbb{R}^N)\,:\, u \,\, \mbox{has compact support}\, \}.$
	
	\item $o_{n}(1)$ denotes a real sequence with $o_{n}(1)\to 0$ as $n \to +\infty$.
	
	\item $2^*=\frac{2N}{N-2}$ if $N \geq 3$ and $2^*=+\infty$ if $N=1,2$.
\end{itemize}

\section{Preliminaries}\label{preliminares}

Hereafter, we consider the problem
\begin{equation}\label{constante}
\left\{
\begin{array}{lc}
-\Delta u+ V(0)u=u \log u^2, & \quad \mbox{in} \quad \mathbb{R}^{N}, \\
u \in H^1(\mathbb{R}^{N}). & \;  \\
\end{array}
\right.
\end{equation}
The corresponding energy functional associated to (\ref{constante}) will be denoted by \linebreak $J_0:H^{1}(\mathbb{R}^N) \rightarrow (-\infty, +\infty]$ and defined as
$$
J_{0}(u)=\dis\frac{1}{2}\int \big (|\nabla u|^2+({V(0)} +1)|u|^2\big)dx-\dis\frac{1}{2}\int u^2\log u^2dx.
$$

In \cite{sz} is proved that problem (\ref{constante}) has a positive solution attained at the infimum
\begin{equation}\label{sisi}
c_0:=\inf_{u \in \mathcal{N}_{0}}J_{0}(u),
\end{equation}
where
$$
\mathcal{N}_{0}=\left\{u \in D(J_{0})\backslash \{0\}: J_{0}(u)=\displaystyle \frac{1}{2} \int |u|^{2}\,dx  \right\}
$$
and
$$
D(J_{0})=\left\{u \in H^{1}(\mathbb{R}^N)\,:\, J_{0}(u)<+\infty \right\}.
$$

\textit{Mutatis mutandis} the previous notations, we shall also use the energy level
$$
c_\infty=\inf_{u \in \mathcal{N}_{\infty}}J_{\infty}(u),
$$
corresponding to problem (\ref{constante}), replacing $V(0)$ by $V_\infty$. Using the definition of $c_0$ and $c_\infty$, it follows that
\begin{equation}\label{cezinho}
c_0 < c_\infty.
\end{equation}

Related to the numbers $c_0$ and $c_\infty$, we would like to point out that they are the mountain pass levels of the functionals $J_0$ and $J_\infty$ respectively.

Following the approach explored in \cite{AlvesdeMorais,cs,sz}, due to the lack of smoothness of $J_{0}$ and $J_\infty$, let us decompose them into a sum of a $C^1$ functional plus a convex lower semicontinuous functional, respectively. For $\delta>0$, let us define the following functions:

$$
F_1(s)=
\left\{ \begin{array}{lc}
0, & \; s=0 \\
-\frac{1}{2}s^2\log s^2, & \; 0<|s|<\delta \\
-\frac{1}{2}s^2(
\log\delta^2+3)+2\delta|s|-\frac{1}{2}\delta^2,  & \; |s| \geq \delta
\end{array} \right.
$$

\noindent and

$$
F_2(s)=
\left\{ \begin{array}{lc}
0, & \;  |s|<\delta \\
\frac{1}{2}s^2\log(s^2/\delta^2)+2\delta|s|-\frac{3}{2}s^2-\frac{1}{2}\delta^2,  & \; |s| \geq  \delta.
\end{array} \right
.$$
Therefore
\begin{equation}\label{efes}
F_2(s)-F_1(s)=\frac{1}{2}s^2\log s^2, \quad \forall s \in \mathbb{R},
\end{equation}
and the functionals $J_0,J_\infty:H^{1}(\mathbb{R}^N) \rightarrow (-\infty, +\infty]$ may be rewritten as

\begin{equation}\label{funcional}
J_0(u)=\Phi_0(u)+\Psi(u) \quad \mbox{and} \quad J_\infty(u)=\Phi_\infty(u)+\Psi(u), \quad u \in H^{1}(\mathbb{R}^N)
\end{equation}
 where
\begin{equation}\label{fi}
\Phi_0(u)=\frac{1}{2}\int (|\nabla u|^{2}+(V(0)+1))|u|^{2})\,dx-\displaystyle \int F_2(u)\,dx,
\end{equation}
\begin{equation}\label{fii}
\Phi_\infty(u)=\frac{1}{2}\int (|\nabla u|^{2}+(V_\infty+1))|u|^{2})\,dx-\displaystyle \int F_2(u)\,dx,
\end{equation}
\noindent and
\begin{equation}\label{psi}
\Psi(u)=\displaystyle \int F_1(u)\,dx.
\end{equation}

It was proved in \cite{cs} and \cite{sz} that $F_1$ and $F_2$ verify the following properties:
\begin{equation}\label{eq1}
F_1, F_2 \in C^1(\mathbb{R},\mathbb{R}).
\end{equation}
If $\delta >0$ is small enough, $F_1$ is convex, even, $F_1(s)\geq 0$ for all $ s\in \mathbb{R}$ and
\begin{equation}\label{eq2}
F'_1(s)s\geq 0, \ s \in \mathbb{R}.
\end{equation}
For each fixed $p \in (2, 2^*)$, there is $C>0$ such that
\begin{equation}\label{eq5}
|F'_2(s)|\leq C|s|^{p-1}, \quad \forall s \in \mathbb{R}.
\end{equation}

\section{An auxiliary functional}\label{tpm}

In what follows, let us fix $R_0>0$ such that $ z_i \in B_{R_0}(0)$ for all $i \in \{0,1,...,\l\}$. Moreover, for each $R>R_0$, we set the functional
 \begin{equation}\label{funcionalA}
J_{\epsilon,R}(u)=\frac{1}{2}\int_{B_R(0)}(|\nabla u|^{2}+(V(\epsilon x)+1)|u|^{2})\,dx-\frac{1}{2}\int_{B_R(0)}u^2\log u^2dx,\ u \in H_0^1(B_R(0)).
\end{equation}
It is easy to check that $J_{\epsilon,R} \in C^{1}(H_0^{1}(B_R(0)),\mathbb{R})$ with
$$
J'_{\epsilon,R}(u)v=\int_{B_R(0)} (\nabla u \nabla v +V(\epsilon x)u v)\,dx -\int_{B_R(0)}v u^2\log u  \,dx, \quad \forall u,v \in H_0^{1}(B_R(0)).
$$
Hereafter, $H_0^{1}(B_R(0))$ is endowed with the norm
\begin{equation} \label{norma}
\left\| u \right\|_\epsilon  = \left( \displaystyle \int (|\nabla u|^2+(V(\epsilon x)+1)|u|^2)dx \right)^{\frac{1}{2}},
\end{equation}
which is also a norm in $H^{1}(\mathbb{R}^N)$. Moreover, this norm is equivalent to the usual norms in $H_0^{1}(B_R(0))$ and $H^{1}(\mathbb{R}^N)$ respectively.

In the sequel,  $\mathcal{N}_{\epsilon,R}$ denotes the Nehari manifold associated with $J_{\epsilon,R}$, that is,
$$
\mathcal{N}_{\epsilon,R}=\left\{u \in H_0^{1}(B_R(0)) \backslash \{0\}: J'_{\epsilon,R}(u)u=0  \right\},
$$
or equivalently,
$$
\mathcal{N}_{\epsilon,R}=\left\{u \in H_0^{1}(B_R(0)) \backslash \{0\}: J_{\epsilon,R}(u)=\displaystyle \frac{1}{2} \int |u|^{2}\,dx  \right\}.
$$

\vspace{0.5 cm}

The next three lemmas show that $J_{\epsilon,R}$ verifies the mountain pass geometry and the well known $(PS)$ condition.

\begin{lemma}\label{vicente} For all $\epsilon>0,R>R_0$, the functional $J_{\epsilon,R}$ has the mountain pass geometry.
\end{lemma}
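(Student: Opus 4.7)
The plan is to verify the two standard mountain pass conditions on the Hilbert space $H_0^{1}(B_R(0))$ equipped with the norm $\|\cdot\|_\epsilon$ introduced in (\ref{norma}): (a) there exist $\rho,\alpha>0$ such that $J_{\epsilon,R}(u)\ge\alpha$ whenever $\|u\|_\epsilon=\rho$, and (b) there exists $e\in H_0^{1}(B_R(0))$ with $\|e\|_\epsilon>\rho$ and $J_{\epsilon,R}(e)<0$. Since $B_R(0)$ is bounded, $\int_{B_R(0)} u^{2}\log u^{2}\,dx$ is finite for every $u\in H_0^{1}(B_R(0))$, so $J_{\epsilon,R}$ is a genuine $C^{1}$ functional here and no Szulkin-type decomposition is forced on us; nevertheless, the already-introduced splitting (\ref{efes}), $\tfrac{1}{2}s^{2}\log s^{2}=F_2(s)-F_1(s)$, makes the local estimate transparent.

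For (a), I would rewrite
\[
J_{\epsilon,R}(u)=\tfrac{1}{2}\|u\|_\epsilon^{2}+\int_{B_R(0)}F_1(u)\,dx-\int_{B_R(0)}F_2(u)\,dx,
\]
drop the nonnegative term $\int_{B_R(0)}F_1(u)\,dx$, and use (\ref{eq5}) together with $F_2(0)=0$ to get $|F_2(s)|\le C|s|^{p}$ for some fixed $p\in(2,2^{*})$. The Sobolev embedding $H_0^{1}(B_R(0))\hookrightarrow L^{p}(B_R(0))$, together with the equivalence of $\|\cdot\|_\epsilon$ with the usual $H^{1}$ norm, then yields $J_{\epsilon,R}(u)\ge\tfrac{1}{2}\|u\|_\epsilon^{2}-C\|u\|_\epsilon^{p}$; since $p>2$, condition (a) follows by taking $\rho>0$ sufficiently small.

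For (b), I would fix any $\varphi\in C_0^{\infty}(B_R(0))\setminus\{0\}$ and compute, for $t>0$,
\[
J_{\epsilon,R}(t\varphi)=\tfrac{t^{2}}{2}\|\varphi\|_\epsilon^{2}-\tfrac{t^{2}\log t^{2}}{2}\int_{B_R(0)}\varphi^{2}\,dx-\tfrac{t^{2}}{2}\int_{B_R(0)}\varphi^{2}\log\varphi^{2}\,dx.
\]
Because $\log t^{2}\to+\infty$, the second term dominates as $t\to+\infty$ and forces $J_{\epsilon,R}(t\varphi)\to-\infty$, so any sufficiently large $t$ produces an admissible $e=t\varphi$. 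I do not foresee any real obstacle here: the boundedness of $B_R(0)$ removes the usual integrability pathology of the logarithmic nonlinearity, the $F_1,F_2$ splitting provides polynomial control of the nonlinear part near the origin, and the superlinear growth of $s\mapsto s\log s^{2}$ at infinity drives $J_{\epsilon,R}$ to $-\infty$ along every nontrivial ray.
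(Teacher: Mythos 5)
Your proposal is correct and follows essentially the same route as the paper: the local positivity comes from dropping the nonnegative $F_1$ term and bounding $F_2$ via (\ref{eq5}) and Sobolev embedding, and the negativity at infinity comes from the explicit scaling identity $\int (t\varphi)^2\log(t\varphi)^2 = t^2\log t^2\int\varphi^2 + t^2\int\varphi^2\log\varphi^2$. The only (immaterial for this statement) difference is that the paper fixes the test function with support in $B_{R_0}(0)$ so that the mountain pass constants can be taken independent of $\epsilon$ and $R$, which is exploited later.
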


\begin{proof} \mbox{} \\
\noindent $(i)$: Note that $J_{\epsilon,R}(u)\geq\dis\frac{1}{2}\|u\|_\epsilon^2-\int_{B_R(0)} F_{2}(u)dx$. Hence,  from (\ref{eq5}),  fixed $p \in (2,2^*)$, it follows that
$$
J_{\epsilon,R}(u)\geq \dis\frac{1}{2}\|u\|_\epsilon ^2-C\|u\|_\epsilon^p \geq C_1> 0,
$$
for some $C_1>0$ and $\|u\|_\epsilon>0$ small enough. Here the constant $C_1$ does not depend on $\epsilon$ and $R>0$. \\
\noindent $(ii)$: Let us fix $u \in D(J_{\epsilon, R}) \setminus \{0\}$ with $supp u \subset B_{R_0}(0)$ and $s>0$. Using (\ref{efes}) we get
$$
J_{\epsilon,R}(su)=\dis\frac{s^2}{2}\|u\|_\epsilon^2-\dis\frac{1}{2}\int_{B_{R_0}(0)} s^2u^2\log(|su|^2)dx= s^2 \left[J_{\epsilon, R}(u)-\log s \dis\int_{B_{R_0}(0)} u^2dx\right]\rightarrow - \infty, \, \mbox{as} \, s \rightarrow +\infty.
$$
Thereby, there is $s_0>0$ independent of $\epsilon>0$ small enough and $R>R_0$ such that $J_{\epsilon,R}(s_{0}u)<0$.
\end{proof}

\begin{lemma}\label{limitado}
All (PS) sequences of $J_{\epsilon,R}$ are bounded in $H_0^1(B_R(0))$.
\end{lemma}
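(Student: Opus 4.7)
The plan is to extract two complementary estimates from the (PS) condition and combine them. Let $(u_n)\subset H_0^1(B_R(0))$ be a Palais-Smale sequence, so $|J_{\epsilon,R}(u_n)|\le M$ for some $M$ and $\|J'_{\epsilon,R}(u_n)\|_{H^{-1}(B_R(0))}\to 0$.

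The first step is a direct computation exploiting the exact cancellation of the logarithmic terms in the combination $2J_{\epsilon,R}(u)-J'_{\epsilon,R}(u)u$:
\[
2J_{\epsilon,R}(u_n) - J'_{\epsilon,R}(u_n)u_n = \int_{B_R(0)}|u_n|^2\,dx.
\]
Since $|J'_{\epsilon,R}(u_n)u_n|\le o_n(1)\|u_n\|_\epsilon$, this already yields, for $n$ large,
\[
|u_n|_2^2 \le 2M + o_n(1)\|u_n\|_\epsilon,
\]
so the $L^2$-norm is controlled in terms of the full $H^1$-norm.

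The second step is to bound $\|u_n\|_\epsilon$ itself. Using the decomposition $\tfrac{1}{2}s^2\log s^2 = F_2(s)-F_1(s)$ from Section \ref{preliminares} together with $F_1\ge 0$,
\[
\tfrac{1}{2}\|u_n\|_\epsilon^2 \;=\; J_{\epsilon,R}(u_n) + \tfrac{1}{2}\int u_n^2\log u_n^2\,dx \;\le\; M + \int F_2(u_n)\,dx \;\le\; M + C|u_n|_p^p,
\]
for any fixed $p\in(2,2^*)$, the last inequality coming from integrating (\ref{eq5}). A Gagliardo-Nirenberg interpolation gives
\[
|u_n|_p^p \;\le\; C\,|u_n|_2^{p(1-\alpha)}\,\|u_n\|_\epsilon^{p\alpha},\qquad p\alpha=\tfrac{N(p-2)}{2},
\]
after which Young's inequality with a small parameter separates this product, so that the $\|u_n\|_\epsilon^2$ piece on the right can be absorbed on the left, yielding
\[
\|u_n\|_\epsilon^2 \;\le\; C_1 + C_2\,|u_n|_2^{q},\qquad q=\tfrac{2p(1-\alpha)}{2-p\alpha}.
\]

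Finally, substituting the $L^2$-bound from the first step and assuming, towards contradiction, that $\|u_n\|_\epsilon\to\infty$, one would obtain $\|u_n\|_\epsilon^2 \le C + o_n(1)\|u_n\|_\epsilon^{q/2}$ for $n$ large, which is absurd as soon as $q/2<2$. The single delicate point is to choose $p$ close enough to $2$ that simultaneously $p\alpha<2$ (so Young's inequality applies in the absorbing form) and $q<4$; since $p\alpha\to 0$ and $q\to 2$ as $p\to 2^+$, such a choice is always available. This exponent balancing is the only non-routine step; the remainder consists of standard manipulations.
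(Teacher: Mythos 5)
Your argument is correct, and its first half coincides with the paper's: both start from the identity $2J_{\epsilon,R}(u_n)-J'_{\epsilon,R}(u_n)u_n=\int_{B_R(0)}|u_n|^2\,dx$, which controls $|u_n|_2^2$ by $C+o_n(1)\|u_n\|_\epsilon$. Where you diverge is in the second half, the coercivity estimate. The paper invokes the logarithmic Sobolev inequality of Lieb--Loss, (\ref{xerazade0}), with the gradient constant $a^2/\pi=1/4$ chosen small, so that $\int u_n^2\log u_n^2\,dx\leq \frac{1}{4}|\nabla u_n|_2^2+C(1+\|u_n\|_\epsilon)^{1+\xi}$ for some $\xi\in(0,1)$; this absorbs the whole logarithmic term in one stroke and yields $d+o_n(1)\geq C[\|u_n\|_\epsilon^2-(1+\|u_n\|_\epsilon)^{1+\xi}]$ directly. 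You instead use only the elementary decomposition (\ref{efes}) with $F_1\geq 0$ and the polynomial bound on $F_2$ from (\ref{eq5}), followed by Gagliardo--Nirenberg interpolation and Young's inequality, reducing everything to the exponent conditions $p\alpha<2$ and $q<4$, both of which hold for $p$ sufficiently close to $2$ (your bookkeeping here checks out: $q\to 2$ as $p\to 2^{+}$). The trade-off is that the paper's route is shorter but imports a nontrivial external inequality, whereas yours stays entirely within the machinery already set up in Section \ref{preliminares} at the cost of the interpolation argument; both are complete proofs.
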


\begin{proof} Let $(u_n) \subset H_0^1(B_R(0))$ be a $(PS)_d$ sequence. Then,
$$
\int_{B_R(0)} |u_n|^2dx=2J_{\epsilon,R}(u_n)-J'_{\epsilon,R}(u_n)u_n=2d+o_n(1)+o_n(1)\|u_n\|_{\epsilon} \leq C+o_n(1)\|u_n\|_{\epsilon},
$$
for some $C>0$. Consequently
\begin{equation}\label{rimsky}
|u_n|^2_{L^2(B_R(0))}\leq  C+o_n(1)\|u_n\|_{\epsilon}.
\end{equation}
Now, let us employ the following logarithmic Sobolev inequality found in \cite{lieb},
\begin{equation}\label{xerazade0}
\int u^2 \log u^2dx \leq\frac{a^2}{\pi}|\nabla u|^2_{L^2(\mathbb{R}^{N})}+ \big( \log |u|^2_{L^2(\mathbb{R}^{N})}-N(1+\log a)\big)|u|^2_{L^2(\mathbb{R}^{N})}
\end{equation}
for all $a>0$. Fixing $\frac{a^2}{\pi}=\frac{1}{4}$ and $\xi \in (0,1)$, the inequalities (\ref{rimsky}) and (\ref{xerazade0}) yield

\begin{eqnarray}
 \int_{B_R(0)} u_n^2 \log u_n^2 dx  &\leq&\frac{1}{4}|\nabla u_n|_{L^2(B_R(0))}^2+ C\big(  \log {|u_n|^2_{L^2(B_R(0))}+1} \big)
{|u_n|^2_{L^2(B_R(0))}}\nonumber\\
&\leq&\frac{1}{4}|\nabla u_n|^2_{2}+C_1\big(1+\|u_n\|_{\epsilon}\big)^{1+\xi} \label{isso}.
\end{eqnarray}
Since
$$
d+o_n(1)=J_{\epsilon,R}(u_n)=\frac{1}{2}|\nabla u_n|^2_{2}+  \int_{B_R(0)}(V(\epsilon x)+1)|u_n|^2dx-\frac{1}{2}\int_{B_R(0)} u_n^2 \log u_n^2dx
$$
assertion (\ref{isso}) assures that

\begin{equation}\label{porta}
d+o_n(1) \geq C \big[\| u_n\|^2_{\epsilon}-\big(1+\|u_n\|_{\epsilon}\big)^{1+\xi} \big],
\end{equation}
showing that the sequence $(u_n)$ is bounded in $ H_0^{1}(B_R(0))$.

\end{proof}

\begin{lemma} The mountain pass level $c_{\epsilon,R}$ of $J_{\epsilon,R}$ can be characterized by
$$
c_{\epsilon,R}=\inf_{u \in \mathcal{N}_{\epsilon,R}}J_{\epsilon,R}(u).
$$	
\end{lemma}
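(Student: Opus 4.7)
The plan is to prove the two inequalities $c_{\epsilon,R} \leq \inf_{\mathcal{N}_{\epsilon,R}} J_{\epsilon,R}$ and $c_{\epsilon,R} \geq \inf_{\mathcal{N}_{\epsilon,R}} J_{\epsilon,R}$, where the mountain pass level is
$$
c_{\epsilon,R} = \inf_{\gamma \in \Gamma_{\epsilon,R}} \max_{t \in [0,1]} J_{\epsilon,R}(\gamma(t)),
$$
with $\Gamma_{\epsilon,R} = \{\gamma \in C([0,1], H_0^1(B_R(0))) : \gamma(0) = 0,\ J_{\epsilon,R}(\gamma(1)) < 0\}$. This class is non-empty by Lemma 3.1. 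Both inequalities will follow from a careful analysis of the fibering map $g_u(t) := J_{\epsilon,R}(tu)$ for $t>0$.

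First, using $(tu)^2 \log(tu)^2 = t^2 u^2 \log t^2 + t^2 u^2 \log u^2$, I would compute
$$
g_u'(t) = t\left[\|u\|_\epsilon^2 - |u|_2^2 - (\log t^2)\, |u|_2^2 - \int u^2 \log u^2\, dx\right].
$$
For fixed $u \in H_0^1(B_R(0)) \setminus \{0\}$ the bracket is strictly decreasing in $t$ and runs from $+\infty$ to $-\infty$; hence it vanishes at a unique $t_u > 0$, which is the only critical point and the global maximum of $g_u$, and $t_u u$ is the unique element of $\mathcal{N}_{\epsilon,R}$ on the ray $\mathbb{R}^+ u$. The closed form
$$
\log t_u^2 = \frac{\|u\|_\epsilon^2 - |u|_2^2 - \int u^2 \log u^2\, dx}{|u|_2^2}
$$
makes clear that $u \mapsto t_u$ is continuous on $H_0^1(B_R(0)) \setminus \{0\}$.

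For the upper bound, given $u \in \mathcal{N}_{\epsilon,R}$ we have $J_{\epsilon,R}(u) = \max_{t \geq 0} J_{\epsilon,R}(tu)$; since $g_u(t) \to -\infty$ I choose $T > 1$ with $J_{\epsilon,R}(Tu) < 0$, so $\gamma(s) := sTu$ belongs to $\Gamma_{\epsilon,R}$ with $\max_s J_{\epsilon,R}(\gamma(s)) = J_{\epsilon,R}(u)$, giving $c_{\epsilon,R} \leq \inf_{\mathcal{N}_{\epsilon,R}} J_{\epsilon,R}$. For the lower bound I would show every $\gamma \in \Gamma_{\epsilon,R}$ crosses $\mathcal{N}_{\epsilon,R}$: since $g_{\gamma(1)}$ is strictly increasing on $(0, t_{\gamma(1)})$ with $g_{\gamma(1)}(0) = 0$ and $g_{\gamma(1)}(1) = J_{\epsilon,R}(\gamma(1)) < 0$, one must have $t_{\gamma(1)} < 1$. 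On the other hand, each $w \in \mathcal{N}_{\epsilon,R}$ satisfies $|w|_2^2 = 2 J_{\epsilon,R}(w) \geq 2C_1 > 0$, obtained by rescaling $w$ to the sphere $\|\cdot\|_\epsilon = r$ from Lemma 3.1(i) and using that $J_{\epsilon,R}(w) = \max_t J_{\epsilon,R}(tw)$. Applying this to $w = t_{\gamma(s)} \gamma(s)$ yields $t_{\gamma(s)} \geq \sqrt{2C_1}/|\gamma(s)|_2 \to +\infty$ as $s \to 0^+$. The continuity of $s \mapsto t_{\gamma(s)}$ and the intermediate value theorem then produce $s^* \in (0,1)$ with $\gamma(s^*) \in \mathcal{N}_{\epsilon,R}$, whence $\max_s J_{\epsilon,R}(\gamma(s)) \geq \inf_{\mathcal{N}_{\epsilon,R}} J_{\epsilon,R}$.

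The main obstacle is the lower bound, whose success rests on the quantitative Nehari estimate $|w|_2^2 \geq 2C_1$: this in turn requires that the threshold $C_1$ from Lemma 3.1(i) be a sphere-level constant that does not depend on the particular Nehari element. Once that is in hand the intermediate value argument is standard, and the continuity of the fiber parameter $t_u$ is immediate from its closed-form expression.
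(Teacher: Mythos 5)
Your argument is correct, and it is essentially the canonical fibering-map proof of the equivalence between the mountain pass level and the Nehari infimum. The comparison with the paper is a bit degenerate here: the authors do not prove the lemma at all, they simply cite Lemma 3.3 of Alves--de Morais Filho, and the cited proof rests on exactly the structural fact you isolate, namely that because $\frac{d}{dt}\bigl[\tfrac12 t^2\log t^2\bigr]/t=\log t^2+1$ is strictly increasing, the bracket in $g_u'(t)$ is strictly decreasing from $+\infty$ to $-\infty$, so each ray meets $\mathcal{N}_{\epsilon,R}$ exactly once at the global maximum of the fiber map. Your closed formula for $\log t_u^2$ and the resulting continuity of $u\mapsto t_u$ are legitimate on $H_0^1(B_R(0))$ precisely because the domain is bounded, so $\int u^2\log u^2\,dx$ is finite and $J_{\epsilon,R}$ is genuinely $C^1$ (this is the whole reason the auxiliary problem on $B_R(0)$ is introduced); your quantitative bound $|w|_2^2=2J_{\epsilon,R}(w)\ge 2C_1$ for $w\in\mathcal{N}_{\epsilon,R}$, obtained by evaluating the fiber map on the sphere $\|\cdot\|_\epsilon=r$ and noting that the constant in Lemma 3.1(i) depends only on $r$, is also the standard way to force $t_{\gamma(s)}\to+\infty$. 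The only point worth tightening is at the start of the intermediate value argument: a path $\gamma\in\Gamma_{\epsilon,R}$ may vanish at interior parameters, so $t_{\gamma(s)}$ need not be defined on all of $(0,1)$; you should set $s_0=\sup\{s:\gamma(s)=0\}<1$ and run the continuity and IVT argument on $(s_0,1]$, where $|\gamma(s)|_2\to 0$ as $s\to s_0^+$ still gives $t_{\gamma(s)}\to+\infty$. With that routine repair the proof is complete.
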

\begin{proof} See \cite[Lemma 3.3]{AlvesdeMorais}.
	
\end{proof}

\begin{lemma} \label{compacidadeR} The functional $J_{\epsilon,R}$ satisfies the $(PS)$ condition.
	
\end{lemma}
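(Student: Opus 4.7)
My plan is to use the convex/smooth splitting $J_{\epsilon,R}=\Phi_{\epsilon,R}+\Psi$ already introduced in Section 2 (adapted from $\Phi_0$ in the obvious way), where
$$
\Phi_{\epsilon,R}(u)=\tfrac{1}{2}\|u\|_{\epsilon}^{2}-\int_{B_R(0)} F_2(u)\,dx,\qquad \Psi(u)=\int_{B_R(0)} F_1(u)\,dx,
$$
and to show that every Palais--Smale sequence converges strongly along a subsequence by combining the compactness of the Sobolev embedding on the bounded set $B_R(0)$, the estimate \eqref{eq5} for $F_2'$, and the convexity of $F_1$.

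Let $(u_n)\subset H_0^{1}(B_R(0))$ be a $(PS)_d$ sequence. By Lemma \ref{limitado}, $(u_n)$ is bounded; hence, passing to a subsequence, $u_n\rightharpoonup u$ in $H_0^{1}(B_R(0))$, $u_n\to u$ in $L^q(B_R(0))$ for every $q\in[1,2^{*})$, and $u_n(x)\to u(x)$ a.e. Writing
$$
J'_{\epsilon,R}(u_n)(u_n-u)=(u_n,u_n-u)_{\epsilon}-\int_{B_R(0)} F_2'(u_n)(u_n-u)\,dx+\int_{B_R(0)} F_1'(u_n)(u_n-u)\,dx,
$$
where $(\cdot,\cdot)_{\epsilon}$ is the inner product associated with the norm \eqref{norma}, the left-hand side tends to $0$ since $\|u_n-u\|_{\epsilon}$ is bounded and $J'_{\epsilon,R}(u_n)\to 0$. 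Fixing $p\in(2,2^{*})$, the growth estimate \eqref{eq5} together with the strong $L^{p}$-convergence gives $\int F_2'(u_n)(u_n-u)\,dx\to 0$.

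For the $F_1$-term I exploit convexity. Since $F_1$ is convex and differentiable,
$$
F_1'(u_n)(u_n-u)\geq F_1(u_n)-F_1(u)\quad\text{a.e.,}
$$
and, because $F_1\geq 0$ and $u_n\to u$ a.e., Fatou's lemma yields
$$
\liminf_{n\to\infty}\int_{B_R(0)} F_1(u_n)\,dx\geq \int_{B_R(0)} F_1(u)\,dx.
$$
Combining the two displays gives $\liminf_{n\to\infty}\int F_1'(u_n)(u_n-u)\,dx\geq 0$. Inserting this into the identity above produces
$$
\limsup_{n\to\infty}(u_n,u_n-u)_{\epsilon}\leq 0.
$$
Since $(u_n,u)_{\epsilon}\to\|u\|_{\epsilon}^{2}$ by weak convergence, this reads $\limsup\|u_n\|_{\epsilon}^{2}\leq\|u\|_{\epsilon}^{2}$; weak lower semicontinuity of the norm supplies the reverse inequality, so $\|u_n\|_{\epsilon}\to\|u\|_{\epsilon}$. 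In the Hilbert space $(H_0^{1}(B_R(0)),\|\cdot\|_{\epsilon})$ this, together with weak convergence, gives $u_n\to u$ in norm, which is the desired $(PS)$ property.

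The only delicate step in the plan is the control of the sign-indefinite term $\int u_n(u_n-u)\log u_n^{2}\,dx$, which is precisely why I route everything through the $F_1/F_2$ decomposition: the ``bad'' part of the logarithm (unbounded near $0$) is absorbed into the convex piece $\Psi$, where the inequality $F_1'(u_n)(u_n-u)\geq F_1(u_n)-F_1(u)$ combined with Fatou bypasses the need for a pointwise dominated-convergence argument; the ``good'' part is handled by the polynomial bound \eqref{eq5} and the compactness of the embedding $H_0^{1}(B_R(0))\hookrightarrow L^p(B_R(0))$, which is available only because $R<\infty$ (this is exactly where the truncation to balls pays off).
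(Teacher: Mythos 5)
Your proof is correct, but it takes a genuinely different route from the paper's. The paper does not invoke the $F_1/F_2$ splitting in this lemma at all: it works directly with $f(t)=t\log t^2$ and $F(t)=\frac12(t^2\log t^2-t^2)$, observing that for any fixed $p\in(2,2^*)$ one has $|f(t)|\leq C(1+|t|^{p-1})$ and $|F(t)|\leq C(1+|t|^{p})$ (the logarithmic singularity at the origin is harmless for $f$ itself, and the additive constant is integrable precisely because $B_R(0)$ is bounded). Compact Sobolev embedding then gives $\int f(u_n)u_n\to\int f(u)u$ and $\int f(u_n)v\to\int f(u)v$, and the usual identity $\|u_n-u\|_\epsilon^2=J'_{\epsilon,R}(u_n)u_n-J'_{\epsilon,R}(u_n)u+\int f(u_n)u_n-\int f(u_n)u+o_n(1)$ closes the argument. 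Your route through $\Phi+\Psi$, with the polynomial bound \eqref{eq5} for $F_2'$ and the convexity--Fatou inequality $\liminf\int F_1'(u_n)(u_n-u)\,dx\geq 0$, is also valid (note that $\int F_1(u)\,dx<\infty$ since $F_1$ has at most quadratic growth and the domain is bounded, so the Fatou step is legitimate), and it is the more robust strategy: it is essentially what one must do on $\mathbb{R}^N$, where the constant in $C(1+|t|^{p-1})$ is no longer integrable and the direct bound fails. The paper's argument is shorter and more elementary, but it exploits the boundedness of $B_R(0)$ in a way yours does not need to; your version would transfer with fewer changes to settings where only the $F_1/F_2$ machinery is available.
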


\begin{proof} Let $(u_n) \subset H_0^1(B_R(0))$ be a $(PS)_d$ sequence for $J_{\epsilon,R}$, that is,
$$
J_{\epsilon,R}(u_n) \to d \quad \mbox{and} \quad J'_{\epsilon,R}(u_n) \to 0.
$$	
By Lemma \ref{limitado}, we can assume that there is $u \in H_0^1(B_R(0))$ and a subsequence of $(u_n)$, still denoted by itself, such that
$$
u_n \rightharpoonup u \quad \mbox{in} \quad H_0^1(B_R(0))
$$
$$
u_n \to u \quad \mbox{in} \quad L^{q}(B_R(0)), \quad \forall q \in [1,2^*)
$$
and
$$
u_n(x) \to u(x) \quad \mbox{a.e. in} \quad B_R(0).
$$
Setting $f(t)=t\log t^2$, $F(t)=\int_{0}^{t}f(s)\,ds=\frac{1}{2}(t^2 \log t^2 -t^2)$ for all $t \in \mathbb{R}$ and fixing $p \in (2,2^*)$, there is $C>0$ such that
$$
|f(t)| \leq C(1+|t|^{p-1}), \quad \forall t \in \mathbb{R}
$$	
and
$$
|F(t)| \leq C(1+|t|^{p}), \quad \forall t \in \mathbb{R}.
$$	
Hence, by the Sobolev embeddings,
$$
\int_{B_R(0)}f(u_n)u_n\,dx \to \int_{B_R(0)}f(u)u\,dx
$$
and
$$
\int_{B_R(0)}f(u_n)v\,dx \to \int_{B_R(0)}f(u)v\,dx, \quad \forall v \in H_0^{1}(B_R(0)).
$$
Now, using the limits $J'_{\epsilon,R}(u_n)u_n=J'_{\epsilon,R}(u_n)u+o_{n}(1)$, it is easy to see that
$$
\|u_n-u\|^{2}_{\epsilon}=\int_{B_R(0)}f(u_n)u_n\,dx-\int_{B_R(0)}f(u_n)u\,dx+o_{n}(1)=o_{n}(1),
$$
showing the lemma. 	
\end{proof}

In the following, let us fix  $\rho_0>0$ satisfying $\overline{B_{\rho_0}(z_i)}\cap\overline{B_{\rho_0}(z_j)}={\O}$ for $i\neq j$ and $i,j\in\{1,...,l\}$, $\bigcup^l_{i=1}B_{\rho_0}(z_i)\subset B_{R_0}(0)$ and $K_{\frac{\rho_0}{2}}=\bigcup^l_{i=1}\overline{B_{\frac{\rho_0}{2}}(z_i)}$. Moreover, we also set the function $Q_{\epsilon}: H^{1}(\mathbb{R}^N) \setminus\{0\} \rightarrow \mathbb{R}^N$ by
$$
Q_{\epsilon}(u)=\frac{\int \chi(\epsilon x)g(\epsilon x) |u|^{2}\,dx}{ \int g(\epsilon x)|u|^{2}\,dx },
$$
where $\chi:\mathbb{R}^N\rightarrow\mathbb{R}^N$ is given by
$$
\chi(x)=\begin{cases}x,~~~~~~~ \text{if}~~ |x|\leq R_0,\\ R_0\frac{x}{|x|},~~\text{if}~~~ |x|>R_0, \end{cases}
$$
and $g:\mathbb{R}^N \to \mathbb{R}$ is a radial positive continuous function such that
\begin{equation} \label{g}
g(z_i)=1, \quad \forall i \in \{1,...,l\} \quad \mbox{and} \quad g(x) \to 0 \quad \mbox{as} \quad |x| \to +\infty.
\end{equation}
The following lemma is very useful to obtain $(PS)_c$ sequences associated with $J_{\epsilon,R}$.
\begin{lemma}\label{Lem:3.2a}
	There exist $\alpha_0>0$, $\epsilon_1 \in (0,\epsilon_0)$ small enough and $R_1>R_0$ large enough such that if $u\in {\mathcal N}_{\epsilon,R}$ and $J_{\epsilon,R}(u) \leq c_0+\alpha_0$, then
	$Q_\epsilon (u)\in K_{\frac{\rho_0}{2}}, \,\, \forall \epsilon  \in (0,\epsilon_1)$ and $ R \geq R_1$.
\end{lemma}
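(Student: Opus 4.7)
I argue by contradiction. Assume the conclusion fails; then there exist sequences $\alpha_n\to 0^+$, $\epsilon_n\to 0^+$, $R_n\to+\infty$ and $u_n\in\mathcal{N}_{\epsilon_n,R_n}$ with
\[
J_{\epsilon_n,R_n}(u_n)\le c_0+\alpha_n, \qquad Q_{\epsilon_n}(u_n)\notin K_{\rho_0/2}.
\]
Extending each $u_n$ by zero outside $B_{R_n}(0)$, I view $(u_n)$ as a sequence in $H^1(\mathbb R^N)$. The Nehari identity gives $|u_n|_2^2=2J_{\epsilon_n,R_n}(u_n)\le 2c_0+2\alpha_n$, and the argument from Lemma \ref{limitado}, whose estimate depends on $|u_n|_2$ through (\ref{xerazade0}) and is uniform in $R$, yields boundedness of $(u_n)$ in $H^1(\mathbb R^N)$. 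The mountain pass lower bound from Lemma \ref{vicente} also gives $J_{\epsilon_n,R_n}(u_n)\ge C_1>0$, so $|u_n|_2$ is bounded away from $0$.

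I would next rule out Lions-vanishing: if $\sup_{y\in\mathbb R^N}\int_{B_r(y)}u_n^2\,dx\to 0$ for every $r>0$, then $u_n\to 0$ in $L^q(\mathbb R^N)$ for every $q\in(2,2^*)$. Splitting $\mathbb R^N=\{|u_n|\le 1\}\cup\{|u_n|>1\}$ and using $u^2\log u^2\le 0$ on the former and $u^2\log u^2\le C|u|^{2+\eta}$ on the latter (for some small $\eta>0$), one obtains $\int u_n^2\log u_n^2\,dx\to 0$. But the Nehari identity combined with $(V2)$ yields $\int u_n^2\log u_n^2\,dx=\int(|\nabla u_n|^2+V(\epsilon_n x)u_n^2)\,dx\ge|u_n|_2^2\ge 2C_1$, a contradiction. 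Hence there exist $r,\beta>0$ and points $y_n\in\mathbb R^N$ with $\int_{B_r(y_n)}u_n^2\,dx\ge\beta$, and up to subsequence $v_n(x):=u_n(x+y_n)\rightharpoonup v\ne 0$ in $H^1(\mathbb R^N)$.

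The central step is to identify $y_*:=\lim\epsilon_n y_n$. Up to subsequence either $\epsilon_n y_n\to y_*\in\mathbb R^N$ or $|\epsilon_n y_n|\to+\infty$. Using the decomposition (\ref{funcional}) with $\Psi$ convex lower semicontinuous and $\Phi$ of class $C^1$, together with Fatou's lemma and a Brezis-Lieb-type decomposition for $(v_n-v)$, I would show that $v$ belongs to the Nehari manifold $\mathcal{N}_{V_*}$ associated with the constant potential $V_*$, where $V_*=V(y_*)$ in the first case and $V_*=V_\infty$ in the second. Thus
\[
c_{V_*}\le \tfrac12|v|_2^2\le \tfrac12\liminf|v_n|_2^2=\liminf J_{\epsilon_n,R_n}(u_n)\le c_0.
\]
The case $V_*=V_\infty$ is excluded by (\ref{cezinho}), since $c_\infty>c_0$. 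Hence $\epsilon_n y_n\to y_*\in\mathbb R^N$, and strict monotonicity of the Nehari level in the constant potential combined with $(V2)$ forces $V(y_*)=\min V=1$, so $y_*=z_i$ for some $i\in\{1,\dots,l\}$. The displayed chain of inequalities is then an equality, so $|v_n|_2\to|v|_2$ and (together with weak $L^2$-convergence) $v_n\to v$ strongly in $L^2(\mathbb R^N)$.

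To conclude, the change of variable $x=z+y_n$ rewrites
\[
Q_{\epsilon_n}(u_n)=\frac{\int\chi(\epsilon_n z+\epsilon_n y_n)\,g(\epsilon_n z+\epsilon_n y_n)\,v_n(z)^2\,dz}{\int g(\epsilon_n z+\epsilon_n y_n)\,v_n(z)^2\,dz}.
\]
Since $z_i\in B_{R_0}(0)$ yields $\chi(z_i)=z_i$ and (\ref{g}) yields $g(z_i)=1$, the continuity and boundedness of $\chi,g$ together with the strong $L^2$-convergence of $(v_n)$ justify a dominated convergence passage to the limit: the numerator tends to $z_i|v|_2^2$ and the denominator to $|v|_2^2$. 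Hence $Q_{\epsilon_n}(u_n)\to z_i\in K_{\rho_0/2}$, a closed set, contradicting $Q_{\epsilon_n}(u_n)\notin K_{\rho_0/2}$. The main technical obstacle I anticipate is the rigorous passage to the limit placing $v$ in $\mathcal{N}_{V_*}$: because $J$ is not $C^1$ owing to the logarithmic term, one cannot invoke weak-to-weak convergence in an Euler-Lagrange equation, and one must instead exploit the convexity and lower semicontinuity of $\Psi$ in (\ref{funcional}) together with Brezis-Lieb decompositions.
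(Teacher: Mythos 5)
Your overall architecture is the same as the paper's: argue by contradiction with sequences $\alpha_n\to 0$, $\epsilon_n\to 0$, $R_n\to\infty$, rule out vanishing, translate by $y_n$, split into the cases $|\epsilon_n y_n|\to\infty$ and $\epsilon_n y_n\to y_*$, exclude the first via $c_0<c_\infty$ and force $V(y_*)=1$ in the second via monotonicity of the level $c_{V(y)}$, and finally pass to the limit in $Q_{\epsilon_n}$. However, there is one genuine gap at what you yourself flag as the central step. You never upgrade $(u_n)$ from a near-minimizing sequence on $\mathcal{N}_{\epsilon_n,R_n}$ to a Palais--Smale sequence. The paper does this explicitly: since $\Psi'_{\epsilon_n,R_n}(u)u=-\int|u|^2\le-\beta<0$ uniformly on the constraint, Ekeland's variational principle (in the form of \cite[Theorem 8.5]{Willem}) yields $\|J'_{\epsilon_n,R_n}(u_n)\|\to 0$. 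Without this, the only information you have is the single scalar identity $J'_{\epsilon_n,R_n}(u_n)u_n=0$, and that identity cannot be localized. The tools you propose instead --- Fatou, convexity of $\Psi$, and a Brezis--Lieb decomposition --- do not close the argument: writing $\int F_2(v_n)=\int F_2(v)+\int F_2(v_n-v)+o_n(1)$ with $F_2\ge 0$ gives $\liminf\int F_2(v_n)\ge\int F_2(v)$, which is the \emph{wrong} direction for proving $J'_{V_*}(v)v\le 0$ when the remainder $v_n-v$ carries positive mass (precisely the dichotomy scenario you have not yet excluded at that stage). The inequality $J'_{V_*}(v)v\le 0$ is obtained in the paper (see the analogous computation in Proposition \ref{lema1}) by testing the \emph{approximate equation} against $\varphi_R(\cdot-y_n)u_n$, passing to the limit for fixed $R$ using local compactness, and then letting $R\to\infty$ with Fatou applied to the nonnegative term $F'_1(v)v\varphi_R$; every step of that chain uses $J'_{\epsilon_n,R_n}(u_n)\to 0$, not merely membership in the Nehari manifold.

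Two smaller points. First, the correct conclusion of the limit passage is $J'_{V_*}(v)v\le 0$, not $v\in\mathcal{N}_{V_*}$; one then takes $s\in(0,1]$ with $sv\in\mathcal{N}_{V_*}$ and uses $J_{V_*}(sv)=\frac{s^2}{2}\int|v|^2\le\frac{1}{2}\int|v|^2$, so your subsequent inequality chain survives unchanged. Second, your derivation of strong $L^2$ convergence of $v_n$ from the equality case of that chain (using $c_{V_*}=c_0$ when $V_*=1$) is a legitimate, slightly more self-contained substitute for the paper's appeal to \cite[Section 6]{AlvesdeMorais}, and your exclusion of vanishing via the Nehari identity and the bound $u^2\log u^2\le C|u|^{2+\eta}$ on $\{|u|>1\}$ is correct. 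Once you insert the Ekeland step, the proof goes through along the lines you describe.
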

\begin{proof} If the lemma is not true, then exist $\alpha_n\rightarrow 0,~\epsilon_n \to 0, R_n \to +\infty $ and $u_n \in {\mathcal N}_{\epsilon_n,R_n}$ such that
	$$
 J_{\epsilon_n,R_n}(u_n)\leq c_{0}+\alpha_n
	$$
	and
	$$
	Q_{\epsilon_n}(u_n)\not\in K_{\frac{\rho_0}{2}}.
	$$
Since $c_{\epsilon_n,R_n} \geq c_0$, the above inequality gives
$$
c_0\leq c_{\epsilon_n,R_n}\leq J_{\epsilon_n,R_n}(u_n)\leq c_{0}+\alpha_n,
$$
and so,
$$
u_n \in {\mathcal N}_{\epsilon_n,R_n} \quad \mbox{and} \quad J_{\epsilon_n,R_n}(u_n)= c_{\epsilon_n,R_n} +o_n(1).
$$
Setting the functional $\Psi_{\epsilon_n,R_n}:H_0^{1}(B_{R_n}) \to \mathbb{R}$ by
$$
\Psi_{\epsilon_n,R_n}(u)=J_{\epsilon_n,R_n}(u)-\frac{1}{2} \int_{B_{\epsilon_n,R_n(0)}} |u|^{2}\,dx,
$$
we derive
$$
{\mathcal N}_{\epsilon_n,R_n}=\left\{u \in H_0^{1}(B_R(0)) \backslash \{0\}: \Psi_{\epsilon_n,R_n}(u)=0  \right\}.
$$
A simple computation ensures that there is $\beta>0$, which is independent of $n$,  such that
$$
\Psi'_{\epsilon_n,R_n}(u)u=-\int_{B_{\epsilon_n,R_n(0)}} |u|^{2}\leq -\beta, \quad \forall n \in \mathbb{N},
$$
otherwise $c_{\epsilon_n,R_n} \to 0$, which is absurd. With this information in  our hands, we can apply the Ekeland Variational principal found in \cite[Theorem 8.5]{Willem} to assume,  without loss of generality, that  $\|J'_{\epsilon_n,B_{R_n}}(u_n)\| \to 0$ as $n \to +\infty$.

By \cite[Section 6]{AlvesdeMorais}, we need to consider the following two cases:
\begin{itemize}
	\item[\rm (a)]$u_n\rightarrow u \neq 0$ in $L^2(\mathbb{R}^N)$,  \\
	or
	
    \item[\rm (b)] There exists $\{y_n\}\subset\mathbb{R}^N$ such that $\vartheta_n=u_n(.+y_n) \to \vartheta \not=0 $ in $L^2(\mathbb{R}^N)$.
\end{itemize}

\noindent Since $J_{\epsilon_n,R_n}(u_n)=\dis \frac{1}{2}\int_{B_{R_n}(0)} |u_n|^{2} \geq c_0>0$, we derive that $\dis \liminf_{n\rightarrow\infty}t_n >0$. Hence,  the above conclusion ensures that
   \begin{itemize}
	\item[\rm ($a'$)]$u_n\rightarrow u \neq 0$ in $L^2(\mathbb{R}^N)$, for some $u \in H^{1}(\mathbb{R}^N)$ \\
	or
	\item[\rm ($b'$)] There exists $(y_n) \subset\mathbb{R}^N$ such that $v_n=u_n(.+y_n) \to v \not=0 $ in $L^2(\mathbb{R}^N)$, for some $v \in H^{1}(\mathbb{R}^N)$.
\end{itemize}

If $(a')$ holds, we have that
	$$
	Q_{\epsilon_n}(u_n)=\frac{\int \chi(\epsilon_nx)g(\epsilon_n x)|u_n|^{2}dx}{\int g(\epsilon_n x)|u_n|^{2}dx}\rightarrow \frac{\int \chi(0)g(0)|u|^{2}dx}{g(0)\int |u|^{2}dx}=0\in K_{\frac{\rho_0}{2}}.
	$$
	From this, $Q_{\epsilon_n}(u_n)\in K_{\frac{\rho_0}{2}}$ for $n$ large enough, which is a contradiction. \\
Now, if $(b')$ holds, we must distinguish two cases:
\begin{itemize}
	\item[\rm (I)]$|\epsilon_ny_n| \rightarrow+\infty$ \\
	or
	\item[\rm (II)] $\epsilon_ny_n \rightarrow y$ for some $y\in\mathbb{R}^N$, for some subsequence.
\end{itemize}
 If $(I)$  holds, we have that $J'_{\infty}(v)v \leq 0$. Thus, there is $s \in (0,1]$ such that $sv \in {\mathcal N}_{\infty}$
$$
\aligned
2c_{\infty}&\leq 2J_{\infty}(sv)=2J_{\infty}(sv)-J'_{\infty}(sv)sv\\
& =  \int |sv|^{2}\,dx \\
&  \leq \int |v|^{2}\,dx \leq \liminf_{n\rightarrow\infty}\int |v_n|^{2}\,dx= \liminf_{n\rightarrow\infty}\int |u_n|^{2}\,dx\\
&  = \lim_{n \rightarrow +\infty}2J_{\epsilon_n,R_n}(u_n)=2c_0,
\endaligned
$$
which contradicts (\ref{cezinho}).

Now, if (II) holds, the previous argument yields
\begin{equation}\label{(3.10a)}
\aligned
c_{V(y)}\leq c_0,
\endaligned
\end{equation}
where $c_{V(y)}$ is the mountain pass level of the functional $J_{V(y)}:H^{1}(\mathbb{R}^N)\rightarrow\mathbb{R}$ given by
$$
J_{V(y)}(u)=\frac{1}{2}\displaystyle \int (|\nabla u|^2+(V(y)+1)|u|^2)dx-\displaystyle \frac{1}{2}\int  u^2 \log u^2\,dx.
$$
One can see that
$$
c_{V(y)}=\inf_{u\in \mathcal {M}_{V(y)}}J_{V(y)}(u),
$$
where
$$
\mathcal {M}_{V(y)}=\left\{u \in D(J_{V(y)})\backslash \{0\}: J_{V(y)}(u)=\displaystyle \frac{1}{2} \int |u|^{2}\,dx  \right\}.
$$
If $V(y)>1$, as in \cite{AlvesdeMorais}, it is possible to prove that $c_{V(y)}>c_0$, which contradicts (\ref{(3.10a)}). Then $V(y)=1$ and $y=z_i$ for some $i=1,...,l$. Hence
$$
\begin{array}{l}
Q_{\epsilon_n}(u_n)=\dis \frac{\int \chi(\epsilon_nx)g(\epsilon_n x)|u_n|^{2}dx}{\int g(\epsilon_n x)|u_n|^{2}dx}=\frac{\int \chi(\epsilon_n(x+y_n))g(\epsilon_n x+\epsilon_ny_n)|v_n|^{2}dx}{\int g(\epsilon_n x+\epsilon_ny_n)|v_n|^{2}dx } \\
\mbox{}\\
\;\;\;\;\;\;\;\;\;\;\;\;\;\;\rightarrow \dis \frac{\int \chi(z_i)g(z_i)|v|^{2}dx}{\int g(z_i)|v|^{2}dx}=z_i\in K_{\frac{\rho_0}{2}}
\end{array}$$
from where it follows that $Q_{\epsilon_n}(u_n) \in K_{\frac{\rho_0}{2}}$ for $n$ large, which is absurd, because we are assuming that
$Q_{\epsilon_n}(u_n) \not\in K_{\frac{\rho_0}{2}}$. This finishes the proof.  \end{proof}

\vspace{0.5 cm}

Next, we specify the following symbols.
$$\aligned
&\Omega^i_{\epsilon,R}=\{u\in {\mathcal N}_{\epsilon,R}:|Q_\epsilon(u)-z_i|<\rho_0\},\\
&\partial \Omega^i_{\epsilon,R}=\{u\in {\mathcal N}_{\epsilon,R}:|Q_\epsilon(u)-z_i|=\rho_0\},\\
&\alpha^i_{\epsilon,R}=\inf_{u\in\Omega^i_{\epsilon,R}}J_{\epsilon,R}(u),\\
& \tilde\alpha^i_{\epsilon,R}=\inf_{u\in\partial\Omega^i_{\epsilon,R}}J_{\epsilon,R}(u).
\endaligned \eqno$$

\begin{lemma}\label{Lem:3.4}
	Given $ \gamma \in (0,(c_{\infty}-c_0)/2)>0$, there exist $\epsilon_2 \in (0, \epsilon_1)$ small enough such that
	$$
	\alpha^i_{\epsilon,R}<c_0+\gamma~~and~~\alpha^i_{\epsilon,R}<\tilde\alpha^i_{\epsilon,R},
	$$
	for all $\epsilon \in (0,\epsilon_2)$ and $R \geq R_1=R_1(\epsilon)>R_0$. 
\end{lemma}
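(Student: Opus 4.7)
My plan is to prove the two inequalities separately: the upper bound $\alpha^i_{\epsilon,R}<c_0+\gamma$ by concentrating a ground state of $J_0$ near $z_i/\epsilon$, and $\alpha^i_{\epsilon,R}<\tilde\alpha^i_{\epsilon,R}$ as an immediate consequence of the preceding Lemma \ref{Lem:3.2a}. Let $w$ be the positive ground state of $J_0$ at level $c_0$, whose existence is recalled at (\ref{sisi}); because $V(z_i)=V(0)=1$ by $(V2)$, the limit problem at $z_i$ coincides with (\ref{constante}). Fix a smooth cut-off $\eta_M$ equal to $1$ on $B_M$ and supported in $B_{2M}$, and define
$$
\phi_{\epsilon,M}(x)=\eta_M(x-z_i/\epsilon)\,w(x-z_i/\epsilon),
$$
which lies in $H_0^{1}(B_R(0))$ as soon as $R\ge R_1(\epsilon):=\max\{R_1,\,|z_i|/\epsilon+2M\}$, with $R_1$ the constant from Lemma \ref{Lem:3.2a}. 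I would then project onto $\mathcal N_{\epsilon,R}$ by choosing the unique $t_{\epsilon,M}>0$ with $t_{\epsilon,M}\phi_{\epsilon,M}\in\mathcal N_{\epsilon,R}$.

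Using the change of variable $y=x-z_i/\epsilon$ together with continuity of $V$ (so that $V(\epsilon y+z_i)\to V(z_i)=1$ on the support of $\eta_M w$), I would send $\epsilon\to 0$ and then $M\to\infty$ to obtain convergence of $\|\phi_{\epsilon,M}\|_\epsilon^2$, $|\phi_{\epsilon,M}|_2^2$ and $\int \phi_{\epsilon,M}^2\log\phi_{\epsilon,M}^2\,dx$ to the corresponding quantities for $w$, whence $t_{\epsilon,M}\to 1$ and $J_{\epsilon,R}(t_{\epsilon,M}\phi_{\epsilon,M})\to J_0(w)=c_0$. Since $Q_\epsilon(tu)=Q_\epsilon(u)$ for $t>0$, the same change of variable and dominated convergence yield
$$
Q_\epsilon(t_{\epsilon,M}\phi_{\epsilon,M})=\frac{\int \chi(\epsilon y+z_i)g(\epsilon y+z_i)(\eta_M w)^2\,dy}{\int g(\epsilon y+z_i)(\eta_M w)^2\,dy}\longrightarrow \chi(z_i)=z_i,
$$
using $|z_i|\le R_0$ and $g(z_i)=1$ by (\ref{g}). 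Hence $t_{\epsilon,M}\phi_{\epsilon,M}\in\Omega^i_{\epsilon,R}$ for small $\epsilon$ and $R\ge R_1(\epsilon)$, and by shrinking $\epsilon_2$ we secure $\alpha^i_{\epsilon,R}<c_0+\min\{\gamma,\alpha_0/2\}$, which in particular gives the first inequality.

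For the second inequality, pick $u\in\partial\Omega^i_{\epsilon,R}$, so that $|Q_\epsilon(u)-z_i|=\rho_0$. The disjointness $\overline{B_{\rho_0}(z_i)}\cap\overline{B_{\rho_0}(z_j)}=\emptyset$ forces $|z_i-z_j|>2\rho_0$ for $i\neq j$; hence any $u$ with $Q_\epsilon(u)\in\overline{B_{\rho_0/2}(z_j)}$ would yield $|z_i-z_j|\le 3\rho_0/2$, a contradiction, while $Q_\epsilon(u)\notin B_{\rho_0/2}(z_i)$ since $\rho_0>\rho_0/2$. Thus $Q_\epsilon(u)\notin K_{\rho_0/2}$, and the contrapositive of Lemma \ref{Lem:3.2a} gives $J_{\epsilon,R}(u)>c_0+\alpha_0$ for every $\epsilon\in(0,\epsilon_1)$ and $R\ge R_1$, so that $\tilde\alpha^i_{\epsilon,R}\ge c_0+\alpha_0>c_0+\alpha_0/2>\alpha^i_{\epsilon,R}$.

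The main obstacle will be the test-function energy estimate: the logarithmic nonlinearity is sign-changing and unbounded near zero, so the convergence of $\int\phi_{\epsilon,M}^2\log\phi_{\epsilon,M}^2\,dx$ must be handled via the splitting (\ref{efes}), leaning on the nonnegativity of $F_1$, the polynomial bound (\ref{eq5}) on $F_2'$, and the exponential decay of the ground state $w$ to control the truncation tail as $M\to\infty$. These estimates also fix the order in which the limits $\epsilon\to 0$ and $M\to\infty$ must be taken, and guarantee that the Nehari-projection scalar $t_{\epsilon,M}$ converges to $1$.
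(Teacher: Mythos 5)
Your proposal is correct and follows essentially the same route as the paper: translate a ground state of $J_0$ to $z_i/\epsilon$, truncate, project onto $\mathcal{N}_{\epsilon,R}$ to obtain a competitor in $\Omega^i_{\epsilon,R}$ with energy close to $c_0$, and deduce the second inequality from the contrapositive of Lemma \ref{Lem:3.2a}. You in fact supply slightly more detail than the paper does, e.g.\ the triangle-inequality check that $Q_\epsilon(u)\notin K_{\frac{\rho_0}{2}}$ for $u\in\partial\Omega^i_{\epsilon,R}$ and the treatment of the logarithmic term via the splitting (\ref{efes}).
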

\begin{proof}Let $u\in H^{1}(\mathbb{R}^N)$  be  a ground state solution for $J_0$, i.e.,
	$$
	u \in \mathcal{N}_0, \quad J_0(u)=c_0 \quad \mbox{and} \quad J'_0(u)=0.
	$$
Hereafter, for each $w \in D(J_0)$, $J'_0(w):H_c^{1}(\mathbb{R}^N) \to \mathbb{R}$ means the functional given by
$$
\langle J'_0(w),z\rangle=\langle \Phi_0'(w), z\rangle-\int F'_1(w)z\,dx, \quad \forall z \in H_c^{1}(\mathbb{R}^N)
$$
and
$$
\|J'_0(w)\|=\sup\left\{\langle  J'_0(w), z\rangle\,:\, z \in H_c^{1}(\mathbb{R}^N) \quad \mbox{and} \quad \|z\|_\epsilon \leq 1 \right\}.
$$
If $\|J'_0(w)\|$ is finite, then $J_0'(w)$ may be extended to a bounded operator in $H^{1}(\mathbb{R}^N)$, and so,  it can be seen as an element of $(H^{1}(\mathbb{R}^N))'$.

For any $1\leq i\leq l$, there is $\epsilon_1>0$ such that
$$
|Q_{\epsilon}(u(\cdot-{z_i}/{\epsilon}))-z_i|< \rho, \quad \forall \epsilon \in (0, \epsilon_1).
$$
Now, we fix $R>R_1=R_1(\epsilon)$ and $t_{\epsilon,R}>0$ such that function $u^i_{\epsilon,R}(x)=t_{\epsilon,R}\varphi_{R}(x) u(x-\frac{z_i}{\epsilon}) \in {\mathcal N}_{\epsilon,R} $,
$$
|Q_{\epsilon}(u^i_{\epsilon,R})-z_i|< \rho, \quad \forall \epsilon \in (0, \epsilon_1) \quad \mbox{and} \quad R>R_1.
$$
and
$$
J_{\epsilon,R}({u}^{i}_{\epsilon,R}) \leq c_0+\frac{\alpha_0}{8}, \quad \forall \epsilon \in (0, \epsilon_1) \quad \mbox{and} \quad R>R_1.
$$
Here, $\varphi_R(x)=\varphi(\frac{x}{R})$ with $\varphi \in C_0^{\infty}(\mathbb{R}^N)$, $0 \leq \varphi(x) \leq 1$ for all $x \in \mathbb{R}^N$, $\varphi(x)=1$ for $x \in B_{1/2}(0)$ and $\varphi(x)=0$ for $x \in B^c_{1}(0)$. Therefore,
$$
u^i_{\epsilon,R} \in \Omega^i_{\epsilon, R}, \quad \forall \epsilon \in (0,\epsilon_2) \quad \mbox{and} \quad R \geq R_1,
$$
from it follows that
\begin{equation} \label{NOVAEQ0}
	\alpha^i_{\epsilon,R}<c_0+\frac{\alpha_0}{4},~~\forall \epsilon \in (0,\epsilon_2) \quad \mbox{and} \quad R \geq R_1.
\end{equation}
	Then, decreasing $\alpha_0$ if necessary,
	$$
	\alpha^i_{\epsilon,R}<c_0+\gamma,~~\forall \epsilon \in (0,\epsilon_2) \quad \mbox{and} \quad R \geq R_1,
	$$
	which is the first inequality. To obtain the second one, note that if $u\in\partial\Omega^i_{\epsilon,R}$, then
	$$
	u\in {\mathcal N}_{\epsilon,R} \quad \mbox{and} \quad |Q_{\epsilon,R}(u)-z_i|=\rho_0>\frac{\rho_0}{2},
	$$
	that is, $Q_{\epsilon,R}(u) \not\in K_{\frac{\rho_0}{2}}$. Thus, from Lemma \ref{Lem:3.2a},
	$$
	J_{\epsilon,R}(u)>c_0+\alpha_0,\,\, \mbox{for all}\,\, u\in\partial\Omega^i_{\epsilon,R}\,\,\,\mbox{and}\,\,\, \forall \epsilon \in (0,\epsilon_2) \quad \mbox{and} \quad R \geq R_1,
	$$
	and so
	\begin{equation} \label{NOVAEQ1}
	\tilde\alpha^i_{\epsilon,R}=\inf_{u\in\partial\Omega^i_{\epsilon,R}}J_{\epsilon,R}(u)\geq c_0 +\alpha_0,~~\forall \epsilon \in (0, \epsilon_2) \quad \mbox{and} \quad R \geq R_1.
	\end{equation}
	Consequently, from (\ref{NOVAEQ0})-(\ref{NOVAEQ1}),
	$$
	\alpha^i_{\epsilon,R}<\tilde\alpha^i_{\epsilon,R}, \quad \mbox{for all} \quad \epsilon \in (0, \epsilon_2) \quad \mbox{and} \quad R \geq R_1,
	$$
	and the results are derived by fixing $\epsilon_2 \in (0,\epsilon_1)$.
\end{proof}

\vspace{0.5 cm}
\begin{theorem} \label{TEOAUX} There are $\epsilon_* \in (0,\epsilon_2)$ small enough and $R_1=R_1(\epsilon)>R_0$ large enough such that $J_{\epsilon,R}$ has at least  $l$ nontrivial critical points for $\epsilon \in (0,\epsilon_*)$ and $R \geq R_1$. Moreover, all of the solutions are positive.
	
\end{theorem}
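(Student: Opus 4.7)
The plan is to produce, for each $i \in \{1,\dots,l\}$, a critical point $u_{\epsilon,R}^i$ of $J_{\epsilon,R}$ lying in $\Omega^i_{\epsilon,R}$ at the level $\alpha^i_{\epsilon,R}$. Since $\overline{B_{\rho_0}(z_i)}\cap\overline{B_{\rho_0}(z_j)}=\emptyset$ for $i\neq j$, the sets $\Omega^i_{\epsilon,R}$ are pairwise disjoint, so the $l$ critical points are automatically distinct.

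\textbf{Step 1 (Minimizing sequence via Ekeland).} Fix $\epsilon\in(0,\epsilon_2)$ and $R\geq R_1$. Apply Ekeland's variational principle to $J_{\epsilon,R}$ on the closed set $\overline{\Omega^i_{\epsilon,R}}$ to obtain $(u_n)\subset\overline{\Omega^i_{\epsilon,R}}$ with $J_{\epsilon,R}(u_n)\to\alpha^i_{\epsilon,R}$ and with the standard approximate-stationary condition relative to $\overline{\Omega^i_{\epsilon,R}}$. By Lemma \ref{Lem:3.4}, $\alpha^i_{\epsilon,R}<\tilde\alpha^i_{\epsilon,R}$, so for $n$ large $u_n$ lies in the relative interior $\Omega^i_{\epsilon,R}$ of $\mathcal N_{\epsilon,R}$; in particular, $u_n$ stays strictly away from $\partial\Omega^i_{\epsilon,R}$.

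\textbf{Step 2 (From constrained to free PS sequence).} Because $u_n\in\mathcal N_{\epsilon,R}$ with $u_n$ away from $\partial\Omega^i_{\epsilon,R}$, one derives a Lagrange multiplier relation
$$
J'_{\epsilon,R}(u_n)=\lambda_n\,\Psi'_{\epsilon,R}(u_n)+o_n(1)\quad\text{in }H_0^1(B_R(0))',
$$
where $\Psi_{\epsilon,R}(u)=J_{\epsilon,R}(u)-\tfrac12\int_{B_R(0)}|u|^2\,dx$. Testing with $u_n$ and using the uniform estimate $\Psi'_{\epsilon,R}(u_n)u_n=-\int_{B_R(0)}|u_n|^2\,dx\le-\beta<0$ (exactly as in the proof of Lemma \ref{Lem:3.2a}), together with $J'_{\epsilon,R}(u_n)u_n=0$ on the Nehari manifold, forces $\lambda_n\to 0$. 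Hence $(u_n)$ is a free $(PS)_{\alpha^i_{\epsilon,R}}$ sequence for $J_{\epsilon,R}$.

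\textbf{Step 3 (Convergence and distinctness).} Apply Lemma \ref{compacidadeR}: up to a subsequence $u_n\to u^i_{\epsilon,R}$ strongly in $H_0^1(B_R(0))$. Continuity of $Q_\epsilon$ on nontrivial $L^2$ functions (together with $\int|u^i_{\epsilon,R}|^2\ge 2c_0>0$, guaranteeing $u^i_{\epsilon,R}\neq 0$) gives $u^i_{\epsilon,R}\in\overline{\Omega^i_{\epsilon,R}}$ with $J_{\epsilon,R}(u^i_{\epsilon,R})=\alpha^i_{\epsilon,R}<\tilde\alpha^i_{\epsilon,R}$, so in fact $u^i_{\epsilon,R}\in\Omega^i_{\epsilon,R}$; thus $J'_{\epsilon,R}(u^i_{\epsilon,R})=0$. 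Since $\Omega^i_{\epsilon,R}\cap\Omega^j_{\epsilon,R}=\emptyset$ for $i\neq j$, the critical points $u^1_{\epsilon,R},\dots,u^l_{\epsilon,R}$ are pairwise distinct nontrivial solutions of the Euler--Lagrange equation on $B_R(0)$.

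\textbf{Step 4 (Positivity).} The functional $J_{\epsilon,R}$ is even (the map $s\mapsto \frac12 s^2\log s^2$ is even and $|\nabla|u||=|\nabla u|$ a.e.), and $|u|\in\mathcal N_{\epsilon,R}\cap\Omega^i_{\epsilon,R}$ whenever $u$ is, because $Q_\epsilon(u)$ depends only on $|u|^2$. Therefore we may replace $u^i_{\epsilon,R}$ by $|u^i_{\epsilon,R}|\ge 0$, which is still a minimizer, hence a weak solution. Writing the equation in the form $-\Delta u +(V(\epsilon x)+1)u = u(1+\log u^2)$ and applying the strong maximum principle on $B_R(0)$ (using elliptic regularity to guarantee continuity of the coefficient away from the zero set and that the zero set has no interior, e.g.\ as in \cite{AlvesdeMorais}), one concludes $u^i_{\epsilon,R}>0$ in $B_R(0)$.

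The main obstacle is Step 2: even though $J_{\epsilon,R}$ is $C^1$ on $H_0^1(B_R(0))$, the Nehari manifold $\mathcal N_{\epsilon,R}$ is defined by a constraint involving $u^2\log u^2$, and one must verify that the Ekeland/Lagrange-multiplier argument produces a free Palais--Smale sequence. The decisive input is the uniform lower bound $|\Psi'_{\epsilon,R}(u_n)u_n|\ge\beta$, which rules out degeneracy of the multiplier and requires the strict separation $\alpha^i_{\epsilon,R}<\tilde\alpha^i_{\epsilon,R}$ provided by Lemma \ref{Lem:3.4}.
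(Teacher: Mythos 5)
Your proposal is correct, and Steps 1--3 reproduce the paper's argument: Ekeland's variational principle on $\overline{\Omega^i_{\epsilon,R}}$, the strict inequality $\alpha^i_{\epsilon,R}<\tilde\alpha^i_{\epsilon,R}$ from Lemma \ref{Lem:3.4} to keep the sequence off $\partial\Omega^i_{\epsilon,R}$, the nondegeneracy $\Psi'_{\epsilon,R}(u)u=-\int_{B_R(0)}|u|^2\le -2c_0$ to kill the Lagrange multiplier, the $(PS)$ condition of Lemma \ref{compacidadeR}, and distinctness via the disjoint balls $\overline{B_{\rho_0}(z_i)}$. (The paper outsources the Ekeland/multiplier details to Cao--Noussair; you spell them out, correctly.) The one place where you genuinely diverge is the positivity step. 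The paper first shrinks $\gamma$ so that the level $\alpha^i_{\epsilon,R}<c_0+\gamma$ falls below twice the ground-state level $c_{\epsilon,R}$, which rules out sign-changing solutions by the usual energy-doubling argument for $u^{\pm}$, and then invokes oddness of $f(t)=t\log t^2$ to take the solution nonnegative. You instead observe that $J_{\epsilon,R}$, $\mathcal N_{\epsilon,R}$ and $Q_\epsilon$ are all invariant under $u\mapsto |u|$, so the minimizer can be replaced by its modulus, which is again a minimizer on $\Omega^i_{\epsilon,R}$ and hence (by the same natural-constraint argument) a critical point; distinctness survives because $Q_\epsilon(|u^i|)=Q_\epsilon(u^i)$. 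Your route is simpler and avoids any further adjustment of $\gamma$ and $R_1$; the paper's route has the side benefit of showing that the original minimizers themselves are one-signed rather than producing new ones. Both then finish with the strong maximum principle, and both treat the singular coefficient $\log u^2$ near the zero set with the same (cited, not detailed) level of care.
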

\begin{proof}From Lemma \ref{Lem:3.4}, there exist $0<\epsilon_* <\epsilon_2$ small enough and $R_1>R_0$ large enough such that
$$
\alpha^i_{\epsilon,R}<\tilde\alpha^i_{\epsilon,R}, \quad \mbox{for all} \quad \epsilon\in(0,\epsilon_*) \quad \mbox{and} \quad R \geq R_1.
$$
Arguing as in \cite[Proof of Theorem 2.1]{CN}, the above inequality permits to use the Ekeland's variational principle to get a $(PS)_{\alpha^i_{\epsilon,R}}$ sequence $(u^i_n)\subset \Omega^i_{\epsilon,R}$ for $J_{\epsilon,R}$. Noting that $\alpha^i_{\epsilon,R}<c_0+\gamma$, from Lemma \ref{compacidadeR} there exists $u^i$ such that $u_n^i\rightarrow u^i$ in $H_0^{1}(B_R(0))$. So
$$
u^i\in\Omega^i_{\epsilon,R},~~~J_{\epsilon,R}(u^i)=\alpha^i_{\epsilon,R}~~and~~J'_{\epsilon,R}(u^i)=0.
$$
Since
$$\aligned
Q_{\epsilon}(u^i)\in \overline{B_{\rho_0}(z_i)},~ Q_{\epsilon}(u^j)\in \overline{B_{\rho_0}(z_j)},\\
\overline{B_{\rho_0}(z_i)}\cap \overline{B_{\rho_0}(z_j)}=\emptyset \quad \mbox{for} \quad i\neq j.
\endaligned $$
We deduce that $u^i\neq u^j$ for $i\neq j$ for $1\leq i, j\leq l$. Hence $J_{\epsilon,R}$ possesses at least  $l$ nontrivial critical points
for all $\epsilon \in (0,\epsilon_*)$ and $ R \geq R_1$. Finally, decreasing {\color{blue}{$\gamma$}} and increasing $R_1$ if necessary, we can assume that
$$
2c_{\epsilon,R}< c_0+\gamma, \quad \mbox{for} \quad \epsilon \in (0, \epsilon^*) \quad \mbox{and} \quad R \geq R_1.
$$
The above inequality permits to conclude that all of the solutions do not change sign, and as $f(t)=t\log t^2$ is an odd function, we can assume that they are nonnegative. Now, the positivity of the solutions in $B_R(0)$ follows by maximum principle. \end{proof}

\section{Existence of solution for original problem}

In the following, for each $i \in \{1,...,\l\}$ and $\epsilon \in (0,\epsilon_*)$, we set $R_n \to +\infty$ and $u_n^{i}=u_{\epsilon,R_n}^{i}$ be a solution obtained in Theorem \ref{TEOAUX}. Then,
$$
\int_{B_{R_n}}(\nabla u^i_n \nabla v+V(\epsilon x)u^i_n v)\,dx=\int_{B_{R_n}}u^i_n \log \vert u^i_n\vert^{2}v \,dx, \quad \forall v \in H^{1}(B_{R_n}(0))
$$
and
$$
J_{\epsilon,R_n}(u^i_n)=\alpha^i_{\epsilon,R_n}, \quad \forall n \in \mathbb{N}.
$$

\begin{proposition}\label{lema1}
	There exists $u^{i} \in H^{1}(\mathbb{R}^N)$ such that $u^{i}_n\rightharpoonup u^{i}$ in $H^{1}(\mathbb{R}^N)$ and $u^{i} \not=0$ for all $i \in \{1,..,l\}$.
\end{proposition}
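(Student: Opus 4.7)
The plan is to establish a uniform $H^1$-bound on $(u_n^i)$, extract a weak limit $u^i$, and then rule out the possibility that $u^i = 0$ via a concentration-compactness argument combined with the energy gap $c_0 < c_\infty$. Extending each $u_n^i$ by zero outside $B_{R_n}(0)$, I view the sequence as $(u_n^i) \subset H^1(\mathbb{R}^N)$. Since $u_n^i \in \mathcal{N}_{\epsilon,R_n}$, the Nehari identity gives
$$
|u_n^i|_2^2 = 2\alpha^i_{\epsilon,R_n} \leq 2(c_0 + \gamma)
$$
by Lemma \ref{Lem:3.4}; plugging this $L^2$-control into the logarithmic Sobolev inequality (\ref{xerazade0}) and repeating the estimates of Lemma \ref{limitado} yields $\|u_n^i\|_\epsilon \leq C$. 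After extracting a subsequence, $u_n^i \rightharpoonup u^i$ in $H^1(\mathbb{R}^N)$ and $u_n^i \to u^i$ strongly in $L^q_{\mathrm{loc}}(\mathbb{R}^N)$ for every $q \in [1,2^*)$.

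To prove $u^i \neq 0$ I argue by contradiction, assuming $u^i = 0$, and apply Lions' concentration-compactness dichotomy to $(u_n^i)$. The vanishing alternative is ruled out by the Nehari constraint: on the one hand, $J'_{\epsilon,R_n}(u_n^i)u_n^i = 0$ together with $V \geq 1$ (from $(V_2)$) yield
$$
\int |u_n^i|^2 \log |u_n^i|^2\,dx = \int \bigl(|\nabla u_n^i|^2 + V(\epsilon x)|u_n^i|^2\bigr)\,dx \geq |u_n^i|_2^2 \geq 2c_0 > 0;
$$
on the other hand, splitting the same integral over $\{|u_n^i|\geq 1\}$ and $\{|u_n^i|<1\}$ and using $s^2\log s^2 \leq C_p |s|^p$ on the first set (while the contribution of the second set is non-positive), vanishing plus Lions' lemma gives $L^p$-convergence to zero for every $p \in (2,2^*)$ and hence $\int |u_n^i|^2 \log |u_n^i|^2\,dx \leq o(1)$, a contradiction. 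Thus non-vanishing holds: there exist $r,\delta>0$ and $(y_n)\subset \mathbb{R}^N$ such that $\int_{B_r(y_n)}|u_n^i|^2\,dx \geq \delta$. Strong $L^2_{\mathrm{loc}}$-convergence to zero forces $|y_n|\to \infty$, and since $\epsilon>0$ is fixed, also $|\epsilon y_n|\to\infty$. The translates $v_n(x):=u_n^i(x+y_n)$ then have a subsequence satisfying $v_n\rightharpoonup v\neq 0$ in $H^1(\mathbb{R}^N)$, and arguing as in case $(b')$-(I) of the proof of Lemma \ref{Lem:3.2a} one obtains $J'_\infty(v)v \leq 0$. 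Hence $sv \in \mathcal{N}_\infty$ for some $s \in (0,1]$, and
$$
2c_\infty \leq 2J_\infty(sv) = |sv|_2^2 \leq |v|_2^2 \leq \liminf_n |u_n^i|_2^2 = 2\liminf_n \alpha^i_{\epsilon,R_n} \leq 2(c_0+\gamma) < 2c_\infty,
$$
since $\gamma<(c_\infty-c_0)/2$, a contradiction. Therefore $u^i \neq 0$.

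The main technical obstacle is the step $J'_\infty(v)v \leq 0$: because the log nonlinearity is not locally Lipschitz at the origin and the functional $J_\infty$ is only lower semicontinuous, this cannot be verified by a direct test with $v$. Instead one must decompose $J_\infty = \Phi_\infty + \Psi$ as in (\ref{funcional}), exploit the weak lower semicontinuity and Fatou-type behavior of the convex, non-negative functional $\Psi = \int F_1(\cdot)\,dx$, and use the $C^1$-regularity of $\Phi_\infty$ together with the polynomial growth (\ref{eq5}) of $F'_2$ to pass to the limit in the $\Phi_\infty$-piece; this is precisely the framework of the non-smooth critical-point theory already exploited in Lemmas \ref{Lem:3.2a} and \ref{Lem:3.4} and developed in \cite{AlvesdeMorais, cs, sz}.
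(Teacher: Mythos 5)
Your argument is correct and follows essentially the same route as the paper: boundedness via the Nehari identity and the logarithmic Sobolev inequality, then a contradiction argument that combines Lions' concentration--compactness with the energy gap $c_0<c_\infty$, the non-smooth claim $J'_\infty(v)v\le 0$ for the translated weak limit (cutoff plus Fatou in the decomposition $J_\infty=\Phi_\infty+\Psi$), and the bound $\alpha^i_{\epsilon,R_n}<c_0+\gamma$ with $\gamma<(c_\infty-c_0)/2$. The only difference is organizational: you invoke the vanishing/non-vanishing dichotomy and let the $L^2_{\mathrm{loc}}$-convergence to zero force $|y_n|\to\infty$, whereas the paper runs the full vanishing/dichotomy/compactness trichotomy for the normalized densities $\rho_n=|u_n|^2/|u_n|_2^2$ and treats the bounded-center case separately; both versions reduce to the same two contradictions.
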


\begin{proof} Since $(\alpha^i_{\epsilon,n})$ is a bounded sequence, it is easy to check that $(u^i_n)$ is a bounded sequence. Hence, we may assume that $u^i_n\rightharpoonup u^i$ for some $u^i \in H^{1}(\mathbb{R}^N)$. Arguing by contradiction, we assume that there is $i_0 \in \{1,..l\}$ such that $u^{i_0}=0$. In the sequel $(u_n)$ and $(\alpha_n)$ denote $(u^i_n)$ and $(\alpha^i_{\epsilon,R_n})$ respectively.
	
To proceed further we need to use the Concentration Compactness Principle, due to Lions \cite{lions}, employed to the following sequence
	$$
	\rho_n(x):= \frac{|u_n(x)|^2}{|u_n|^{2}_{2}}, \quad \forall x \in \mathbb{R}^N.
	$$
	This principle assures that one and only one of the following statements holds for a subsequence of $(\rho_n)$, still denoted by itself:
	
	\begin{description}
		\item [{\it (Vanishing)}]
		
		\begin{equation}\label{vanishing}
		\displaystyle \lim_{n \to +\infty}\sup_{y \in \mathbb{R}^N} \int_{B_K(y)}\rho_n dx = 0, \quad \forall K >0;
		\end{equation}
		
		\item [{\it (Compactness)}] There exists a sequence of points $(y_n) \subset \mathbb{R}^N$ such that for all $\eta > 0$, there exists $K > 0$ such that
		
		\begin{equation}
		\int_{B_K(y_n)}\rho_n dx \geq 1 - \eta, \quad \forall n \in \mathbb{N};
		\label{compactness}
		\end{equation}

		\item [{\it (Dichotomy)}] There exist $(y_n) \subset \mathbb{R}^N$, $\alpha \in (0,1)$, $K_1 > 0$, $K_n \to +\infty$ such that the functions $\displaystyle \rho_{1,n}(x) := \chi_{B_{K_1}(y_n)}(x)\rho_n(x)$ and $\displaystyle \rho_{2,n}(x) := \chi_{B_{K_n}^c(y_n)}(x)\rho_n(x)$ satisfy
		
		\begin{equation}\
		\int \rho_{1,n} dx \to \alpha \quad \mbox{and} \quad \int \rho_{2,n} dx \to 1 - \alpha.
		\label{dichotomy}
		\end{equation}
	\end{description}
	
	Our objective is to show that $(\rho_n)$ verifies the {\it Compactness} condition and in order to do so we act by excluding the others two possibilities. But this fact will lead to a contradiction, showing the proposition.
	
	The vanishing case (\ref{vanishing}) can not occur, otherwise we conclude that $|u_n|_{p} \rightarrow 0$, and so, $F'_2(u_n)u_n \to 0$ in $L^{1}(\mathbb{R}^N)$. Arguing as in the previous section,  it is possible to prove that $u_n \to 0$ in $H^{1}(\mathbb{R}^N)$. However, this convergence contradicts the fact that $\alpha_n \geq C_1$ for all $n \in \mathbb{N}$, see Lemma \ref{vicente}.

	Let us show that {\it Dichotomy} also does not hold. Suppose that this is not the case. Under  this assumption, we claim that $(y_n)$ is unbounded, because otherwise, in this case, using the fact that $|u_n|_{L^{2}(\mathbb{R}^N)} \not\to 0$, the first convergence in (\ref{dichotomy}) leads to
	$$
	\int_{B_{K_1}(y_n)}|u_n|^2dx =|u_n|_{2}^2 \int_{\mathbb{R}^N}\rho_{1,n} dx \geq \,
	\delta
	$$
	for some $\delta>0$ and for sufficiently large $n$. Then, picking $R' > 0$ such that $B_{K_1}(y_n) \subset B_{R'}(0)$, for all $n \in \mathbb{N}$, it follows that
	$$
	\int_{B_{R'}(0)}|u_n|^2dx \geq \delta, \quad \mbox{for all $n$ sufficiently large.}
	$$
	Since $u_n \to 0$ in $L^{2}(B_{R'}(0))$,  the above inequality is impossible. Thereby $(y_n)$ is an unbounded sequence. In what follows, we set
	\begin{eqnarray}\label{sequencia}
	v_n(x):=u_n(x+y_n), \ x \in \mathbb{R}^{N}.
	\end{eqnarray}
	Hence $(v_n) \subset H^1(\mathbb{R}^{N})$ is bounded and, up to subsequence, we may assume that $v_n\rightharpoonup v$ and by the first part of (\ref{dichotomy}) we have $v \not \equiv 0$.
	
	\begin{claim} \label{A2} $F'_1(v)v \in L^{1}(\mathbb{R}^N)$ and $J_\infty'(v)v \leq 0.$	
		
	\end{claim}

	Note that, if $\varphi \in C^\infty_0(\mathbb{R}^N)$, $0 \leq \varphi \leq 1$, $\varphi \equiv 1$ in $B_1(0)$ and $\varphi \equiv 0$ in $B_{2}(0)^c$, defining  $\varphi_R := \varphi (\cdot/R)$ and $v=\varphi_ R(\cdot -y_n) u_n$, the following equality holds
	$$
	\int \big (\nabla v_n \cdot \nabla (\varphi_R v_n)+({V(\epsilon (x+y_n))} +1)\varphi_R (v_n)^2\big)dx + \int F'_1(v_n)v_n \varphi_Rdx
	$$
	\begin{equation}\label{calma2}
	= \int F'_2(v_n)v_n \varphi_Rdx+o_n(1).
	\end{equation}
	Fixing $R$ and passing to the limit in the above equality when $n \rightarrow \infty$ we get
	$$
	\int \big (\varphi_R |\nabla v|^2+ v \nabla \varphi_R \cdot \nabla v)+(V_\infty +1)\varphi_R v^2\big)dx + \int F'_1(v)v \varphi_R dx\leq \int F'_2(v)v \varphi_Rdx.
	$$
	Now, the claim follows, using that $F'_1(t)t \geq 0$ for all $t \in \mathbb{R}$, and applying Fatou's lemma in the last inequality, as $R \rightarrow + \infty$.

	Therefore, there is  $t_\infty \in (0,1]$ such that $t_\infty v \in {\mathcal N}_\infty$, and so,
	
	$$
	\begin{array}{l}
	c_\infty \leq J_\infty(t_\infty v)=\dis \frac{t_\infty^2}{2} \int|v
	|^2 dx
	\leq \liminf_{n \rightarrow \infty}  \frac{1}{2} \int|v_n|^2dx\leq \limsup_{n \rightarrow \infty}  \frac{1}{2} \int|u_n|^2dx \\
	\;\;\;\;\;\; = \dis \limsup_{n \rightarrow \infty}  J_{\epsilon_n,R_n}(u_n)= \limsup_{n \rightarrow \infty}\alpha_n \leq c_0+\gamma,
	\end{array}
	$$
	which contradicts the fact that $\gamma < c_\infty -c_0$.  Thus, in any case,  \textit{Dichotomy} does not occur and, actually, \textit{Compactness} must hold. To reach our goal let us state the last claim.
	
	\vspace{.2cm}
	
	\begin{claim} \label{A3}
		The sequence of points	$(y_n) \subset \mathbb{R}^N$ in (\ref{compactness}) is bounded.
	\end{claim}
	The proof of this claim consists in assuming by contradiction that the sequence of points $(y_n)$ is unbounded. Then, up to subsequence, $|y_n| \to +\infty$, and we proceed as in the case of {\it Dichotomy}, where $(y_n)$ was unbounded, reaching that $c_0+\gamma \geq c_\infty$.
	
	In view of Claim \ref{A3}, for a given $\eta > 0$, there exists $R > 0$ such that, by (\ref{compactness}),
	$$
	\int_{B_R^c(0)}\rho_n dx < \eta, \quad \forall n \in \mathbb{N},
	$$
or equivalent to
	\begin{equation}
	\int_{B_R^c(0)}|u_n|^2 dx \leq \eta|u_n|^2_{2} \leq b\eta, \quad \forall n \in \mathbb{N},
	\label{decay1}
	\end{equation}
	where $b=\displaystyle \sup_{n \in \mathbb{N}}|u_n|^{2}_{2}$. Then, for $R_1 \geq \max\{R,R'_0\}$, due to the convergence $u_n \to 0$ in $L^2(B_{R_1}(0))$, there exists $n_0 \in \mathbb{N}$ such that
	\begin{equation}
	\int_{B_{R_1}(0)}|u_n|^2 dx \leq \eta, \quad \forall n \geq n_0.
	\label{decay3}
	\end{equation}
	Then, by (\ref{decay1}) and (\ref{decay3}), it follows that if $n \geq n_0$,
	$$
	 \int |u_n|^2 dx \leq \eta + \int_{B_{R_1}^c(0)}|u_n|^2 dx \leq \eta + b\eta  \leq C\eta
	$$
	for some $C$ that does not depend on $\eta$. As $\eta$ is arbitrary, we can conclude that $u_n \to 0$ in $L^2(\mathbb{R}^N)$. Since $(u_n)$ is bounded in $H^{1}(\mathbb{R}^N)$, by interpolation on the Lebesgue spaces, it follows that
	$$
	u_n \to 0 \quad \mbox{in $L^p(\mathbb{R}^N)$, for all $2 \leq p < 2^*$}.
	$$
However, this limit implies that $J_{\epsilon,R_n}(u_n)=\alpha_{n}\to0$, which is impossible, because $\alpha_n \geq c_\epsilon>0$ for all $n \in \mathbb{N}$. \end{proof}

As an immediate consequence of Proposition \ref{lema1}, we have the corollary.
\begin{corollary}\label{corolario2}
	For each sequence $(u^i_n) \subset H^{1}(\mathbb{R}^N)$ given in Proposition \ref{lema1} and for small $\epsilon \in (0,\epsilon_*)$, we have that $u^i \not=0$ and $J'_\epsilon(u^i)v=0$ for all $v \in C_{0}^{\infty}(\mathbb{R}^N)$. Moreover, the following limits hold 	
\begin{equation} \label{Q1}
Q_{\epsilon}(u^i_n) \to Q_{\epsilon}(u^i),\quad i=1,2,\cdots, l.
\end{equation}
Since
$$
Q_{\epsilon}(u^i_n)\in \overline{B_{\rho_0}(z_i)}, \quad \forall n \in \mathbb{N},
$$
we have that
\begin{equation} \label{Q2}
Q_{\epsilon}(u^i)\in \overline{B_{\rho_0}(z_i)}.
\end{equation}
\end{corollary}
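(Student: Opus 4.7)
The non-triviality $u^i\neq 0$ is exactly the conclusion of Proposition \ref{lema1}, so the only substantive points are (i) passing to the limit in the weak formulation to obtain $J'_\epsilon(u^i)v=0$ for every $v\in C_0^\infty(\mathbb{R}^N)$ and (ii) the convergence of the barycenter-type functional $Q_\epsilon$. The plan is to use, on one hand, local Rellich compactness together with the splitting of the logarithmic nonlinearity coming from $F_1,F_2$, and on the other hand the tightness of $(|u_n^i|^2)$ that was established inside the proof of Proposition \ref{lema1} when ruling out \textit{Vanishing} and \textit{Dichotomy}.

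For the Euler--Lagrange identity, I fix $v\in C_0^\infty(\mathbb{R}^N)$ with $\mathrm{supp}\,v\subset B_K(0)$. Since $R_n\to +\infty$, for $n$ large we have $v\in H_0^1(B_{R_n}(0))$, and by Theorem \ref{TEOAUX},
\[
\int_{\mathbb{R}^N}\bigl(\nabla u_n^i\cdot\nabla v+V(\epsilon x)u_n^i v\bigr)\,dx=\int_{\mathbb{R}^N} u_n^i\log |u_n^i|^2\,v\,dx.
\]
The left-hand side passes to the limit immediately from $u_n^i\rightharpoonup u^i$ in $H^1(\mathbb{R}^N)$. For the right-hand side I would write $t\log t^2=F_2'(t)-F_1'(t)$ (up to a linear term, modulo the precise form of the decomposition used earlier): the $F_2'$-piece is handled by the growth bound (\ref{eq5}) $|F_2'(s)|\le C|s|^{p-1}$ with $p\in(2,2^*)$ and the Rellich compactness $u_n^i\to u^i$ in $L^p(B_K(0))$, while the $F_1'$-piece is continuous and, by convexity/monotonicity of $F_1'$ together with $F_1'(s)s\ge 0$ and (\ref{eq2}), a dominated convergence argument on the compact set $\mathrm{supp}\,v$ yields convergence. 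Combining these gives $J'_\epsilon(u^i)v=0$.

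For the convergence $Q_\epsilon(u_n^i)\to Q_\epsilon(u^i)$, I would exploit that the \textit{Compactness} alternative holds for $\rho_n=|u_n^i|^2/|u_n^i|_2^2$ with a bounded sequence of centers $(y_n)$, so the family $(|u_n^i|^2)$ is tight: for every $\eta>0$ there exists $R_\eta>0$ with $\int_{B_{R_\eta}^c(0)}|u_n^i|^2\,dx<\eta$ for all $n$. Since $g$ is continuous, positive and $g(\epsilon x)\to 0$ as $|x|\to\infty$, while $\chi(\epsilon x)$ is uniformly bounded by $R_0$, tightness plus strong $L^2_{\mathrm{loc}}$-convergence yield
\[
\int g(\epsilon x)|u_n^i|^2\,dx\to \int g(\epsilon x)|u^i|^2\,dx,\qquad \int \chi(\epsilon x)g(\epsilon x)|u_n^i|^2\,dx\to \int \chi(\epsilon x)g(\epsilon x)|u^i|^2\,dx.
\]
Because the denominator stays bounded away from zero (the limit $u^i\neq 0$ and $g$ is positive), the quotient passes to the limit, establishing (\ref{Q1}). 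Then (\ref{Q2}) is immediate since $\overline{B_{\rho_0}(z_i)}$ is closed and $Q_\epsilon(u_n^i)\in\overline{B_{\rho_0}(z_i)}$ by construction in Theorem \ref{TEOAUX}.

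The main obstacle I anticipate is the logarithmic term: once $v$ has compact support, the $F_2'$-part is routine, but the $F_1'$-part requires a careful use of convexity/monotonicity since $F_1'$ is only continuous with logarithmic growth near the origin. The cleanest way is to note that $F_1'(u_n^i)\to F_1'(u^i)$ pointwise a.e.\ on $\mathrm{supp}\,v$ and $|F_1'(u_n^i)v|$ is dominated by an $L^1$ function obtained from the uniform $L^q$-bounds of $u_n^i$ on $\mathrm{supp}\,v$, thus Lebesgue's theorem closes the argument.
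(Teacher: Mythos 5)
Your argument is correct and follows essentially the same route as the paper: the identity $J'_\epsilon(u^i)v=0$ is obtained by passing to the limit in the weak formulation on $B_{R_n}(0)$ via local Rellich compactness (the paper does this directly with $f(t)=t\log t^2$ and the limit $u^i_n\to u^i$ in $L^q_{loc}$, while you route it through the $F_1,F_2$ splitting --- both work), and the convergence of $Q_\epsilon$ comes from the decay of $g$ at infinity plus strong local $L^2$ convergence, with (\ref{Q2}) following because the ball is closed. Two points should be tightened. First, you invoke the \emph{Compactness} alternative with bounded centers to claim tightness of $(|u_n^i|^2)$; but in the proof of Proposition \ref{lema1} that alternative is established only under the contradiction hypothesis $u^{i_0}=0$, so it is not available here as a proven fact for the actual sequence. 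Fortunately it is also unnecessary: since $\sup_n|u_n^i|_2^2<\infty$, $|\chi|\le R_0$, and $\sup_{|x|\ge R}g(\epsilon x)\to 0$ as $R\to\infty$ for fixed $\epsilon>0$, the tails $\int_{B_R^c(0)}g(\epsilon x)|u_n^i|^2\,dx$ and $\int_{B_R^c(0)}|\chi(\epsilon x)|g(\epsilon x)|u_n^i|^2\,dx$ are already uniformly small in $n$, which is exactly the mechanism the paper uses. Second, the phrase ``dominated by an $L^1$ function obtained from the uniform $L^q$-bounds'' is not quite right: uniform bounds do not produce a single majorant. Instead, after extracting a subsequence of $(u_n^i)$ that converges a.e.\ and is dominated by an $L^q(\mathrm{supp}\,v)$ function (available from strong $L^q_{loc}$ convergence), the bound $|F_1'(s)|\le C(1+|s|)$ lets dominated convergence go through; alternatively use Vitali's theorem. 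Neither issue affects the validity of the overall argument.
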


\begin{proof} By Proposition \ref{lema1}, we know that $u^i\not=0$ for all $i \in \{1,...,l\}$. The limit $u^i_n \to u^i$ in $L_{loc}^{q}(\mathbb{R}^N)$ for all $q \in [2,2^*)$  ensures that
$$
\int u^i_n \log |u^i_n|^2 v \,dx \rightarrow \int u^i \log |u^i|^2 v  \, dx, \quad \forall v \in C_0^{\infty}(\mathbb{R}^N).
$$
Since
$$
\int (\nabla u^i_n \nabla v + (V(\epsilon x)+1)u^i_nv)\,dx \to \int (\nabla u^i \nabla v + (V(\epsilon x)+1)u^iv)\,dx,  \quad \forall v \in C_0^{\infty}(\mathbb{R}^N),
$$
we can conclude that $J'_\epsilon(u^i)v=0$ for all $v \in C_{0}^{\infty}(\mathbb{R}^N)$. Using the fact that $g(x) \to 0$ as $|x| \to +\infty$, it is easy to show that
$$
\int \chi(\epsilon x)g(\epsilon x) |u^i_n|^{2}\,dx \to \int \chi(\epsilon x)g(\epsilon x) |u^i|^{2}\,dx
$$
and
$$
\int g(\epsilon x)|u^i_n|^{2}\,dx \to \int g(\epsilon x)|u^i|^{2}\,dx.
$$
The above limits ensure that (\ref{Q1}) and (\ref{Q2}) hold.  \end{proof}

\subsection{Proof of Theorem \ref{teorema}}

By Corollary \ref{corolario2}, for each $i \in \{1,...,\l\}$ and $\epsilon \in (0,\epsilon_*)$, there is a solution $u^i \in H^{1}(\mathbb{R}^N)\backslash\{0\}$ for problem (\ref{1'}) such that
$$
Q_{\epsilon}(u^i)\in \overline{B_{\rho_0}(z_i)}.
$$
Since
$$
\overline{B_{\rho_0}(z_i)} \cap \overline{B_{\rho_0}(z_j)}= \emptyset \quad \mbox{and} \quad i \not= j,
$$
it follows that $u^i \not= u^j$ for $i \not= j$.  Due to a change of variable,
the functions \linebreak $v^i(x)=u^i(x/\epsilon), \forall x\in \mathbb{R}^N$, $i \in \{1,...,\l\}$ are $l$ positive solutions of problem (\ref{1}).

\newpage

\noindent \textsc{Claudianor O. Alves } \\
Unidade Acad\^{e}mica de Matem\'atica\\
Universidade Federal de Campina Grande \\
Campina Grande, PB, CEP:58429-900, Brazil \\
\texttt{coalves@mat.ufcg.edu.br} \\
\noindent and \\
\noindent \textsc{Chao Ji}(Corresponding Author) \\
Department of Mathematics\\
East China University of Science and Technology \\
Shanghai 200237, PR China \\
\texttt{jichao@ecust.edu.cn}

\end{document}